\documentclass[11pt,reqno]{amsart}

\usepackage[T1]{fontenc}
\usepackage{lmodern}
\usepackage{microtype}
\usepackage{mathtools}
\usepackage{amssymb}
\usepackage[margin=1.12in]{geometry}
\usepackage[colorlinks=true,linkcolor=blue,citecolor=blue,urlcolor=blue]{hyperref}
\usepackage[capitalise,noabbrev]{cleveref}

\newtheorem{theorem}{Theorem}[section]
\newtheorem{corollary}[theorem]{Corollary}
\newtheorem{lemma}[theorem]{Lemma}
\theoremstyle{remark}

\DeclareMathOperator{\distop}{dist}
\DeclareMathOperator{\spanop}{span}
\newcommand{\R}{\mathbb R}
\newcommand{\cH}{\mathcal H}

\title[Property $(T_X)$ for $L_p$-subspaces]
{Kazhdan's Property $(T)$ for Subspaces and Quotients of
$L_p$-Spaces}

\author{Qingjin Cheng}
\address{School of Mathematical Sciences, Xiamen University,
Xiamen 361005, China}
\email{qjcheng@xmu.edu.cn}
\thanks{Qingjin Cheng was supported by NSFC
(Grant No.~12071389).}
\date{}
\hypersetup{
  pdftitle={Kazhdan's Property (T) for Subspaces and Quotients of Lp-Spaces},
  pdfauthor={Qingjin Cheng},
  pdfsubject={Kazhdan property for subspaces and quotients of Lp-spaces},
  pdfkeywords={Kazhdan property (T), Banach property (TX), Lp-space,
  isometric representation, homogeneous polynomial, invariant weight}
}

\subjclass[2020]{Primary 22D55; Secondary 22D12, 46E30.}
\keywords{Kazhdan's property $(T)$, Banach property $(T_X)$, $L_p$-space,
isometric representation, homogeneous polynomial, invariant weight}

\begin{document}
\begin{abstract}
We prove a uniform spectral-gap estimate at every even exponent for
isometric representations on arbitrary closed subspaces of real
$L_p$-spaces.  As an application, this result, combined with the work
of Bader, Furman, Gelander and Monod, gives an affirmative answer to
their Question~4.1: Kazhdan's property~$(T)$ implies property~$(T_X)$
whenever $X$ is a closed subspace or a quotient of a real $L_p$-space,
for every $1<p<\infty$.
\end{abstract}

\maketitle
\enlargethispage{5pt}

\section{Introduction and main results}

Kazhdan's property~$(T)$ asserts that unitary representations have a
spectral gap away from their invariant vectors.  We refer to
\cite{BHV} for a general account and to the recent surveys
\cite{BaderICM,deLaSalleICM} for developments involving higher forms
of property~$(T)$, Banach-space representations, and rigidity of
lattices.  Bader, Furman, Gelander and Monod introduced a Banach-space
version of this spectral-gap property in \cite{BFGM}.  For a Banach
space $X$, property~$(T_X)$ requires that, for every strongly
continuous isometric representation $\rho$ on $X$, the induced
representation on $X/X^{\rho(G)}$ does not almost have invariant
vectors; the precise definition is recalled in \Cref{sec:preliminaries}.

Throughout the paper, all Banach spaces are real.  If
$(\Omega,\Sigma,\mu)$ is a measure space, we abbreviate
$L_p(\Omega,\Sigma,\mu)$ to $L_p(\mu)$; an $L_p$-space means a space
of this form.

Let $G$ be a locally compact second countable group.  Bader, Furman,
Gelander and Monod proved that property~$(T)$ implies property~$(T_X)$
when $X$ is a closed subspace of $L_p(\mu)$ and
$p\notin\{4,6,8,\ldots\}$, and when $X$ is a quotient of $L_p(\mu)$ and
$p\notin\{4/3,6/5,8/7,\ldots\}$ \cite[Theorem~A]{BFGM}.  In their
proof, the subspace and quotient statements are deduced from the
$L_p(\mu)$ case using Hardin's extension theorem for isometries defined
on $L_p$-subspaces \cite{Hardin}, in the form recorded in
\cite[Theorem~2.18 and Corollary~2.20]{BFGM}.  As they explicitly note,
the restrictions on the exponents in these statements come from this
use of Hardin's theorem \cite[p.~81]{BFGM}.  They then asked whether
property~$(T)$ implies property~$(T_X)$ for every closed subspace and
every quotient $X$ of $L_p(\mu)$, for every $1<p<\infty$
\cite[Question~4.1]{BFGM}.  The exceptional exponents from their theorem
are still recorded in recent work on representations on
$L_p$-subspaces \cite[p.~6]{BoucherSpakula}.

The following theorem resolves the remaining closed-subspace case in
quantitative form.  At every even exponent, it gives a spectral-gap
estimate that is uniform over all $\sigma$-finite measure spaces,
closed subspaces, and isometric representations.

\begin{theorem}
\label{thm:uniform-subspace}
Let $G$ be a locally compact second countable group with Kazhdan's
property $(T)$, and fix a Hilbert Kazhdan pair $(Q,\kappa_2)$ for $G$.
For every even integer $p\ge2$, there exists a constant
$c_p>0$, depending only on $p$ and $\kappa_2$, such that the following
holds: for every $\sigma$-finite measure space
$(\Omega,\Sigma,\mu)$, every closed subspace
$X\subseteq L_p(\mu)$, every strongly continuous
representation $\rho:G\to O(X)$, and every $x\in X$,
\begin{equation}\label{eq:main-gap}
 \max_{q\in Q}\|\rho(q)x-x\|_p
 \ge
 c_p\,\distop\bigl(x,X^{\rho(G)}\bigr).
\end{equation}
\end{theorem}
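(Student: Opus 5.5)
The plan is to linearize the problem by a "pointwise power" construction. Since $p=2k$ is even, the polynomial $y\mapsto\int y^{p}\,d\mu=\|y\|_p^p$ on $X$ polarizes to the symmetric $p$-linear form $\int y_1\cdots y_p\,d\mu$. The key observation is that $\rho(g)$ preserves this form, and hence every form obtained from it by polarization, even though $\rho(g)$ need not extend to $L_p(\mu)$. This is exactly why Hardin's theorem can be avoided.

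\textbf{Step 1 (an auxiliary orthogonal representation).} Let $\cH$ be the closed linear span in $L_2(\mu)$ of the pointwise products $y_1y_2\cdots y_k$ with $y_i\in X$; these lie in $L_2(\mu)$ by H\"older. Inner products of such products are values of the invariant $p$-linear form:
\[
\langle y_1\cdots y_k,\,z_1\cdots z_k\rangle=\int y_1\cdots y_kz_1\cdots z_k\,d\mu .
\]
Therefore the prescription
\[
\pi(g)\Bigl(\sum_j y^{(j)}_1\cdots y^{(j)}_k\Bigr)=\sum_j \rho(g)y^{(j)}_1\cdots\rho(g)y^{(j)}_k
\]
preserves norms of finite sums. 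It is thus well defined and extends to an orthogonal representation $\pi$ of $G$ on $\cH$. Strong continuity of $\pi$ follows from that of $\rho$ and the H\"older estimate
\[
\|a^k-b^k\|_2\le k\,\|a-b\|_p\max(\|a\|_p,\|b\|_p)^{k-1},
\]
obtained by factoring $a^k-b^k=(a-b)\sum_i a^ib^{k-1-i}$. The same estimate shows that $\max_{q\in Q}\|\pi(q)x^k-x^k\|_2\le k\,\|x\|_p^{k-1}\max_{q\in Q}\|\rho(q)x-x\|_p$.

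\textbf{Step 2 (Kazhdan in $\cH$).} Suppose the conclusion fails for small $c$. Since $X$ is uniformly convex, $X=X^{\rho(G)}\oplus X_{\rho}$ with an invariant complement (as in \cite{BFGM}), and the projection has norm controlled by $p$. We may therefore assume $x\in X_\rho$, $\|x\|_p=1$, and $\max_{q\in Q}\|\rho(q)x-x\|_p\le\varepsilon$. By Step~1 and the Hilbert Kazhdan pair $(Q,\kappa_2)$, there is a $\pi$-invariant $\eta\in\cH$ with $\|x^k-\eta\|_2\le k\varepsilon/\kappa_2$. Invariance of $\eta$ and the isometry property give, for every $g\in G$,
\[
\|\rho(g)x^{\,k}-x^k\|_2\le\|\pi(g)x^k-\eta\|_2+\|\eta-x^k\|_2\le 2k\varepsilon/\kappa_2 ,
\]
where $\rho(g)x^{\,k}$ denotes $(\rho(g)x)^k$. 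So the whole orbit of $x^k$ in $\cH$ is uniformly close to $x^k$.

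\textbf{Step 3 (back to $X$).} We convert the bound of Step~2 into $\|\rho(g)x-x\|_p\le\delta(\varepsilon)$ uniformly in $g$, with $\delta(\varepsilon)\to0$ depending only on $p$. Then the orbit of $x$ has diameter $<1$. Its Chebyshev center, which is unique by uniform convexity, is a nonzero invariant vector in $X_\rho$, a contradiction. The quantitative form follows because all moduli depend only on $p$ and $\kappa_2$.

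\textbf{Main obstacle: Step~3 when $k$ is even.} For $k$ odd, the elementary inequality $|a^k-b^k|\ge c_k|a-b|^k$ together with $\|a^k-b^k\|_2^2=\int|a^k-b^k|^2$ gives $\|a-b\|_p^{p}\le C\|a^k-b^k\|_2^2$ directly. For $k$ even, closeness of $x^k$ and $y^k$ only controls $|y|\approx|x|$ pointwise, so sign flips on large sets are not excluded. The first thing I would try is a connectivity argument. Elements of $Q$ move $x$ by at most $\varepsilon$ in $L_p$, while any $g$ changes $|x|$ only slightly. Hence on the set where $|x|$ is not small, $\rho(g)x$ must agree in sign with $x$ or with $-x$ on most of the mass. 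I would then show that the "sign pattern" cannot jump along products of elements of $Q$; since $Q$ generates $G$, this would extend to all $g$. If that fails, I would replace $k$-fold products by mixed products of $k+1$ and $k-1$ factors, pairing odd degrees via the invariant form, to recover a sign-sensitive quantity.
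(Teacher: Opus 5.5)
\textbf{Verdict: genuine gap.} The proposal does not prove the theorem for $p\in\{4,8,12,\dots\}$, i.e.\ for $k$ even.

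Your Steps~1--2 and the odd-$k$ half of Step~3 are essentially the paper's argument. By polarization, your span of products $y_1\cdots y_k$ is the paper's $\cH_k(X)$, spanned by pure powers, and $|a-b|^k\le 2^{k-1}|a^k-b^k|$ settles $p\equiv 2\pmod 4$.

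The gap is the case $k$ even, which you leave open, and neither of your suggested fixes works as stated.

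\emph{Connectivity idea.} Knowing $|\rho(g)x|\approx|x|$ gives no global $\pm$ coherence. Along a word $q_1\cdots q_n$, each step changes the sign of $\rho(q_1\cdots q_j)x$ relative to $x$ only on a set of small $|x|^p$-mass. However, these changes accumulate with $n$, so the relative sign pattern can drift step by step to a mixed one. Nothing in your argument excludes orbit points near $sx$ with $s$ a non-constant $\pm1$-valued function. Excluding them requires a \emph{second} spectral-gap estimate, uniform in $g$, for a sign-sensitive quantity.

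\emph{Mixed-product idea.} This does not supply such an estimate either. The products $y_1\cdots y_{k+1}\in L_{2k/(k+1)}$ and $y_1\cdots y_{k-1}\in L_{2k/(k-1)}$ are merely put in duality by the invariant form. These spaces are not Hilbert spaces, and their norms are not functions of the invariant form. So there is no lifted isometric representation, and the Hilbert Kazhdan pair cannot be applied.

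\textbf{Missing idea.} Use the very nonnegativity you regard as the obstruction.
\begin{enumerate}
\item Let $z$ be the orthogonal projection of $x^k$ onto $\cH^{\pi(G)}$. It is the minimal-norm point of the closed convex hull of the orbit $\{(\rho(g)x)^k\}_{g\in G}$. Since $k$ is even, every element of that orbit equals $|\rho(g)x|^k\ge0$, hence $z\ge0$.
\item Your Step~2 gives $\|x^k-z\|_2\le k\varepsilon/\kappa_2$.
\item Invariance of $z$ gives, for $y\in X$,
\[
\int z|\rho(g)y|^k\,d\mu=\langle \pi(g^{-1})z,y^k\rangle=\int z|y|^k\,d\mu .
\]
Hence $\rho$ descends to a strongly continuous isometric representation on the closure of $X$ in $L_k(z\,d\mu)$. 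This is a closed subspace of an $L_{p/2}$-space over a new $\sigma$-finite measure.
\item A uniform spectral gap $c_{p/2}$ at exponent $p/2$ bounds $\sup_g\int z|\rho(g)x-x|^k\,d\mu$ by $(2\varepsilon/c_{p/2})^k$. This is why the theorem must be proved uniformly over all measure spaces, by induction along $p=2^ar$ down to $2$ or to $2r$ with $r$ odd.
\item Replacing $z$ by $|x|^k$ costs at most $2^k\|x^k-z\|_2$, so $\sup_g\int|x|^k|\rho(g)x-x|^k\,d\mu$ is also small.
\item Finally, apply the pointwise inequality $|a-b|^{2k}\le4^k\bigl(|a^k-b^k|^2+|a|^k|a-b|^k\bigr)$ with $a=x(\omega)$ and $b=(\rho(g)x)(\omega)$. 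Combined with your Step~2, this makes $\sup_g\|\rho(g)x-x\|_p$ small, which is the contradiction you want.
\end{enumerate}
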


Combining the theorem with the nonexceptional case of Bader, Furman,
Gelander and Monod and their duality principle gives the complete
answer to their question.

\begin{corollary}\label{cor:main}
Let $G$ be a locally compact second countable group with Kazhdan's
property $(T)$, let $1<p<\infty$, and let $\mu$ be a $\sigma$-finite
measure on a standard Borel space $\Omega$.  If $X$ is either a closed
subspace or a quotient of $L_p(\mu)$, then
$G$ has property $(T_X)$.
\end{corollary}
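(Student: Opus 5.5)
The plan is to split into the subspace case and the quotient case, and within each to separate the exponents already covered by \cite[Theorem~A]{BFGM} from the exceptional ones. For a closed subspace $X\subseteq L_p(\mu)$ with $p\notin\{4,6,8,\ldots\}$, and for a quotient with $p\notin\{4/3,6/5,8/7,\ldots\}$, property $(T_X)$ is exactly \cite[Theorem~A]{BFGM}. So two cases remain:
\begin{itemize}
\item[(a)] $X$ is a closed subspace of $L_p(\mu)$ with $p\ge4$ even;
\item[(b)] $X$ is a quotient of $L_p(\mu)$ with $p=p'/(p'-1)$ for some even $p'\ge4$.
\end{itemize}

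For case (a) I would deduce $(T_X)$ from \Cref{thm:uniform-subspace}. Since $\mu$ is $\sigma$-finite, the theorem applies to any strongly continuous $\rho:G\to O(X)$. The point to handle is that $(T_X)$ concerns the induced representation on $X/X^{\rho(G)}$, and there the displacement is the quotient norm $\distop(\rho(q)x-x,X^{\rho(G)})$, which is a priori smaller than $\|\rho(q)x-x\|_p$.

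To bridge this I would use the decomposition of \cite{BFGM} for reflexive, indeed uniformly convex, spaces: $X=X^{\rho(G)}\oplus X'$, where $X'$ is the annihilator of the $\rho^*$-invariant vectors in $X^*$. Both summands are $\rho(G)$-invariant, and the projection $P$ onto $X^{\rho(G)}$ along $X'$ has norm $\le1$, since $Px$ lies in the closed convex hull of the orbit of $x$.

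This gives a uniform comparison on $X'$. If $x'\in X'$ and $y\in X^{\rho(G)}$, then $P(x'-y)=-y$, so $\|y\|\le\|x'-y\|$. Hence $\|x'\|\le 2\|x'-y\|$, and taking the infimum over $y$,
\[
\|x'\|\le 2\,\distop\bigl(x',X^{\rho(G)}\bigr)\qquad\text{for all }x'\in X'.
\]

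Now every class in $X/X^{\rho(G)}$ has a representative $x\in X'$, and $\rho(q)x-x\in X'$. Applying the comparison to $\rho(q)x-x$ and then \eqref{eq:main-gap}, we get
\[
\max_{q\in Q}\bigl\|\rho(q)[x]-[x]\bigr\|
\ge\tfrac12\max_{q\in Q}\|\rho(q)x-x\|_p
\ge\tfrac{c_p}{2}\,\distop\bigl(x,X^{\rho(G)}\bigr)
=\tfrac{c_p}{2}\,\bigl\|[x]\bigr\|.
\]
So $(Q,c_p/2)$ is a Kazhdan pair for the induced representation, and it does not almost have invariant vectors. This holds uniformly over all $\rho$, which gives $(T_X)$.

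For case (b) I would use duality. A quotient $X=L_p(\mu)/Y$ is isometric to $(Y^{\perp})^*$, and $Y^\perp$ is a closed subspace of $L_{p'}(\mu)$ with $p'$ even. By case (a), $G$ has $(T_{Y^\perp})$. The duality principle of \cite{BFGM} says that for reflexive (uniformly convex) spaces $(T_E)$ and $(T_{E^*})$ are equivalent, and it then yields $(T_X)$.

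The step I expect to need the most care is the passage from the distance estimate \eqref{eq:main-gap} to the quotient representation, i.e.\ the norm-one invariant projection. If I have misremembered that the projection has norm at most $1$, I would instead use the ergodic-theoretic decomposition for uniformly convex spaces, which still gives a uniform bound on $\|P\|$ and hence a Kazhdan constant depending only on $p$ and $\kappa_2$. I would also check that the duality principle is stated in \cite{BFGM} for all strongly continuous isometric representations, so that no extra hypothesis is needed on $\mu$ beyond $\sigma$-finiteness on a standard Borel space.
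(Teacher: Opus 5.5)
Your proposal is correct and follows essentially the paper's route: the BFGM splitting $X=X^{\rho(G)}\oplus X'_\rho$ combined with \Cref{thm:uniform-subspace} handles even $p$, and for quotients you use $L_p(\mu)/Y\cong (Y^\perp)^*$ together with the duality principle \cite[Corollary~2.12]{BFGM}. Note that this duality principle requires uniform convexity and uniform smoothness, not mere reflexivity; these spaces have both properties. The only real difference is the transfer to $X/X^{\rho(G)}$: the paper applies the theorem directly to $X'_\rho$ (itself a closed subspace of $L_p(\mu)$ with no fixed vectors) and passes to the quotient through the bounded isomorphism $q_\rho|_{X'_\rho}$, whereas your bound $\|x'\|\le 2\,\distop(x',X^{\rho(G)})$ is valid because $Px$ is the minimal-norm point of the closed convex hull of the orbit, so $\|P\|\le 1$, and it gives the explicit Kazhdan constant $c_p/2$ on the quotient.
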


Let us indicate the main idea.  At an even exponent $p=2m$, the
ambient extension of an $L_p$-subspace isometry used in \cite{BFGM} is
no longer available.  We instead use the power map $x\mapsto x^m$ to
associate a canonical Hilbert space to the subspace and lift each of
its isometries directly to that Hilbert space.  If $m$ is odd, the
Hilbert spectral gap transfers back immediately.  If $m$ is even, the
power map loses signs.  Projecting onto the invariant vectors produces
a nonnegative invariant weight; this gives a representation on a
closed subspace of a weighted $L_m$-space.  A pointwise lifting
inequality then recovers the original $L_{2m}$-displacement.  Iterating
this descent along the factorization $p=2^a r$, with $r$ odd, proves the
theorem.

\Cref{sec:preliminaries} recalls the necessary definitions and
establishes the estimates used later.  The proofs of
\Cref{thm:uniform-subspace,cor:main} are given in
\Cref{sec:main-proofs}.

\section{Preliminaries}\label{sec:preliminaries}

This section contains the definitions and preliminary results used in
the proofs of \Cref{thm:uniform-subspace,cor:main}.  After recalling
isometric representations and property~$(T_X)$, we discuss
representations modulo fixed vectors, the power-map construction, and
the spectral-gap estimates used to reduce the exponent.

\subsection{Isometric representations and property
\texorpdfstring{$(T_X)$}{(TX)}}

Let $G$ be a locally compact second countable group and let $X$ be a
Banach space.  We write $O(X)$ for the group of all surjective linear
isometries of $X$, equipped with the strong operator topology.
A linear isometric representation of $G$ on $X$ is a group
homomorphism
\[
 \rho:G\longrightarrow O(X).
\]
Such a representation is \emph{strongly continuous} if $\rho$ is
continuous for this topology; equivalently, for every $x\in X$, the
orbit map
\[
 G\longrightarrow X,\qquad g\longmapsto\rho(g)x,
\]
is norm-continuous.
We put
\[
 S(X)=\{x\in X:\|x\|=1\},\qquad
 X^{\rho(G)}=\{x\in X:\rho(g)x=x\text{ for every }g\in G\},
\]
and, for a closed subspace $M\subseteq X$,
\[
 \distop(x,M)=\inf_{a\in M}\|x-a\|.
\]
A closed subspace $M\subseteq X$ is called \emph{$\rho(G)$-invariant} if
$\rho(g)M=M$ for every $g\in G$.  For closed subspaces
$Y,Z\subseteq X$, the notation
\[
 X=Y\oplus Z
\]
means that $X$ is their topological direct sum: every $x\in X$ has a
unique decomposition $x=y+z$ with $y\in Y$ and $z\in Z$, and the
associated coordinate projections are bounded.  For a subset
$A\subseteq X$, we write
\[
 \operatorname{diam}(A)=\sup\{\|x-y\|:x,y\in A\},
\]
with the convention $\operatorname{diam}(\varnothing)=0$.
We also use the extended-real convention $\inf\varnothing=+\infty$.

Let
\[
 q_\rho:X\longrightarrow X/X^{\rho(G)}
\]
be the quotient map.  The representation $\rho$ induces a strongly
continuous linear isometric representation $\bar\rho$ on
$X/X^{\rho(G)}$, characterized by
\[
 \bar\rho(g)q_\rho(x)=q_\rho\bigl(\rho(g)x\bigr)
 \qquad(g\in G,\ x\in X).
\]
Following \cite[(1.i)]{BFGM}, a strongly continuous representation
$\rho:G\to O(X)$ \emph{almost has invariant vectors} if
\begin{equation}\label{eq:almost-invariant}
 \inf_{x\in S(X)}\operatorname{diam}\bigl(\rho(K)x\bigr)=0
 \qquad\text{for every compact set }K\subseteq G.
\end{equation}
Following \cite[Definition~1.1]{BFGM}, $G$ has property
$(T_X)$ if, for every strongly continuous representation
$\rho:G\to O(X)$, the representation $\bar\rho$ does not almost have
invariant vectors.  For Hilbert $X$,
this is precisely Kazhdan's property~$(T)$
\cite[the discussion following Definition~1.1]{BFGM}.

For the quantitative argument, let $Q\subseteq G$ be compact with
$e\in Q$, and let $\kappa_2>0$.  The pair $(Q,\kappa_2)$ is called a
\emph{Hilbert Kazhdan pair} if every strongly continuous orthogonal
representation $\pi:G\to O(H)$ satisfies
\begin{equation}\label{eq:hilbert-gap}
 \max_{q\in Q}\|\pi(q)\xi-\xi\|_H
 \ge \kappa_2\,\distop\bigl(\xi,H^{\pi(G)}\bigr)
 \qquad(\xi\in H).
\end{equation}
The existence of such a pair is equivalent to property~$(T)$
\cite[Definition~1.1.3]{BHV}.

We also recall two geometric notions used below.  The space $X$ is
\emph{uniformly convex} if
\[
 \delta_X(\varepsilon)
 :=\inf\left\{1-\left\|\frac{x+y}{2}\right\|:
 x,y\in S(X),\ \|x-y\|\ge\varepsilon\right\}>0
 \quad(0<\varepsilon\le2),
\]
and it is \emph{uniformly smooth} if
\[
 \lim_{t\downarrow0}\frac{\varrho_X(t)}{t}=0,
 \qquad
 \varrho_X(t):=\sup_{x,y\in S(X)}
 \left(\frac{\|x+ty\|+\|x-ty\|}{2}-1\right).
\]

\subsection{Representations modulo fixed vectors and normalization}

We first introduce the notation for the canonical complement.  Associated
with a representation $\rho:G\to O(X)$ is its contragredient action
\[
 \rho^*:G\longrightarrow O(X^*),\qquad
 \rho^*(g)=\bigl(\rho(g^{-1})\bigr)^*;
\]
equivalently,
\[
 \bigl(\rho^*(g)f\bigr)(x)
 =f\bigl(\rho(g^{-1})x\bigr)
 \qquad(g\in G,\ f\in X^*,\ x\in X).
\]
Its fixed-vector space is
\[
 (X^*)^{\rho^*(G)}
 =\{f\in X^*: \rho^*(g)f=f\text{ for every }g\in G\}.
\]
For a subset $E\subseteq X^*$, write
\[
 {}^\perp E
 :=\{x\in X:f(x)=0\text{ for every }f\in E\}
\]
for its pre-annihilator in $X$, and define
\begin{equation}\label{eq:canonical-complement}
 X'_\rho
 :={}^\perp\bigl((X^*)^{\rho^*(G)}\bigr)
 =\{x\in X:f(x)=0\text{ for every }
       f\in (X^*)^{\rho^*(G)}\}.
\end{equation}

The quotient representation can be realized on the canonical
complement as follows.

\begin{lemma}\label{lem:canonical-complement}
Let $X$ be a uniformly convex Banach space, and let
$\rho:G\to O(X)$ be a strongly continuous linear isometric
representation.  Then $X'_\rho$ is a closed $\rho(G)$-invariant
subspace satisfying
\begin{equation}\label{eq:canonical-splitting}
 X=X^{\rho(G)}\oplus X'_\rho,
 \qquad
 (X'_\rho)^{\rho(G)}=\{0\}.
\end{equation}
Moreover,
\[
 q_\rho|_{X'_\rho}:
 X'_\rho\longrightarrow X/X^{\rho(G)}
\]
is an isomorphism of Banach $G$-representations, and
\[
 \bar\rho\text{ almost has invariant vectors}
 \quad\Longleftrightarrow\quad
 \rho|_{X'_\rho}\text{ almost has invariant vectors}.
\]
\end{lemma}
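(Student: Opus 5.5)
The plan is to use the mean ergodic theorem for isometric representations on uniformly convex spaces: it produces a $\rho(G)$-equivariant norm-one projection onto $X^{\rho(G)}$ whose kernel is exactly $X'_\rho$. This is the standard argument from \cite{BFGM}, and the steps are as follows.

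\textbf{Step 1: closedness and invariance.} $X'_\rho$ is closed because it is an intersection of kernels of bounded functionals. It is $\rho(G)$-invariant because $(X^*)^{\rho^*(G)}$ is $\rho^*(G)$-invariant. Concretely, if $x\in X'_\rho$ and $f$ is $\rho^*$-fixed, then
\[
f(\rho(g)x)=(\rho^*(g^{-1})f)(x)=f(x)=0.
\]

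\textbf{Step 2: the projection.} For $x\in X$, let $C_x$ be the closed convex hull of the orbit $\rho(G)x$. By uniform convexity, $C_x$ has a unique element $Px$ of minimal norm. Minimal-norm points are not enough here, so I would use the standard alternative: uniformly convex spaces are reflexive, so every isometric representation is weakly almost periodic. The Ryll-Nardzewski / Alaoglu--Birkhoff ergodic theorem then says that $C_x\cap X^{\rho(G)}$ is a single point $Px$. Moreover, $P$ is a linear contraction commuting with $\rho(G)$, with $P^2=P$ and range $X^{\rho(G)}$, and $x-Px$ lies in the closed span of $\{\rho(g)y-y\}$.

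\textbf{Step 3: identifying the kernel of $P$.} I claim $\ker P=X'_\rho$.

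For $\ker P\subseteq X'_\rho$: every $\rho^*$-fixed $f$ annihilates all vectors $\rho(g)y-y$, and $Px$ lies in $C_x$, on which $f$ is constant, equal to $f(x)$. Hence $f(x)=f(Px)$, so $Px=0$ forces $f(x)=0$.

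For $X'_\rho\subseteq \ker P$: suppose $x\in X'_\rho$ but $Px\neq 0$. Choose $h\in X^*$ with $h(Px)\ne0$ and set $f=h\circ P$. Since $P$ commutes with $\rho$ and $P\rho(g)=P$ (both follow from the construction), $f$ is $\rho^*$-fixed. But $f(x)=h(Px)\ne0$, contradicting $x\in X'_\rho$.

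Therefore $X=X^{\rho(G)}\oplus X'_\rho$, with bounded coordinate projections $P$ and $I-P$, each of norm at most $1$ and $2$ respectively. Finally, if $x\in X'_\rho$ is fixed, then $x=Px=0$, so $(X'_\rho)^{\rho(G)}=\{0\}$.

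\textbf{Step 4: the isomorphism with the quotient.} The restriction $q_\rho|_{X'_\rho}$ is injective, since $X'_\rho\cap X^{\rho(G)}=\{0\}$. It is surjective, since $q_\rho(x)=q_\rho(x-Px)$. It intertwines $\rho$ and $\bar\rho$. For norms, we have $\|q_\rho(z)\|\le\|z\|$, and for $z\in X'_\rho$ and any fixed $a$, $z=(I-P)(z-a)$, so $\|z\|\le 2\,\|q_\rho(z)\|$.

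\textbf{Step 5: almost invariant vectors.} An equivariant isomorphism $T$ with $\|T\|,\|T^{-1}\|$ bounded preserves condition \eqref{eq:almost-invariant}. Indeed, normalizing $Tx/\|Tx\|$ changes diameters by at most a factor $\|T\|\,\|T^{-1}\|$, so the infimum is zero on one side if and only if it is zero on the other.

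The main obstacle is Step 2: establishing that $P$ is well defined, linear, and satisfies $P\rho(g)=P$. My first attempt would be to cite the mean ergodic theorem for reflexive spaces, as recorded in \cite{BFGM}. Failing that, I would prove uniqueness of the fixed point in $C_x$ directly. For this, I would show that every fixed point $y\in C_x$ satisfies $f(y)=f(x)$ for all $\rho^*$-fixed $f$. I would then separate any two distinct fixed points using fixed functionals, which exist in $X^*$ by applying the same ergodic argument to the reflexive dual.
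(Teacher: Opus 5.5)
Your proof is correct. Its skeleton matches the paper's: splitting, then no fixed vectors in the complement, then the equivariant isomorphism with the quotient, then transfer of almost invariant vectors by normalizing. The difference is in how you obtain the splitting $X=X^{\rho(G)}\oplus X'_\rho$.

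The paper does not prove the splitting. It invokes \cite[Proposition~2.6]{BFGM}, using that uniform convexity gives superreflexivity and that isometric representations are uniformly equicontinuous. It then gets boundedness of $(q_\rho|_{X'_\rho})^{-1}$ from the bounded inverse theorem.

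You instead build the ergodic projection $P$ onto $X^{\rho(G)}$ yourself, taking $Px$ to be the unique fixed point of $C_x$. Uniqueness comes from separating fixed vectors by $\rho^*$-fixed functionals, which you produce by Ryll-Nardzewski in the reflexive dual. Note that a minimal-norm argument alone would not produce them, since $X^*$ need not be strictly convex. Linearity of $P$ then follows from characterizing $Px$ as the unique fixed vector with $f(Px)=f(x)$ for every $\rho^*$-fixed $f$. Your identification $\ker P=X'_\rho$ via the fixed functionals $h\circ P$ is a neat touch.

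Your route buys a self-contained argument that needs only reflexivity, not uniform convexity. It also gives explicit constants $\|P\|\le 1$, $\|I-P\|\le 2$ and $\|(q_\rho|_{X'_\rho})^{-1}\|\le 2$. The paper's route buys brevity, at the cost of depending on the external proposition.
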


\begin{proof}
Since $X$ is uniformly convex, it is superreflexive, while the
isometric representation $\rho$ is uniformly equicontinuous.  By
\cite[Proposition~2.6]{BFGM}, $X'_\rho$ is a closed
$\rho(G)$-invariant subspace and
\[
 X=X^{\rho(G)}\oplus X'_\rho.
\]
It follows that
$(X'_\rho)^{\rho(G)}=X'_\rho\cap X^{\rho(G)}=\{0\}$.

The direct-sum decomposition also shows that
\[
 J:=q_\rho|_{X'_\rho}:
 X'_\rho\longrightarrow X/X^{\rho(G)}
\]
is bijective.  The map $J$ is bounded, and its inverse is bounded by
the bounded inverse theorem.  Furthermore,
\[
 J\rho(g)x
 =q_\rho\bigl(\rho(g)x\bigr)
 =\bar\rho(g)q_\rho(x)
 =\bar\rho(g)Jx
 \qquad(g\in G,\ x\in X'_\rho),
\]
so $J$ is an isomorphism of Banach $G$-representations.

If $X'_\rho=\{0\}$, then $X/X^{\rho(G)}=\{0\}$, and the final
assertion follows from the convention $\inf\varnothing=+\infty$.
Suppose that $X'_\rho\ne\{0\}$, and write
$\rho'=\rho|_{X'_\rho}$.  For an isometric representation $\sigma$
on a nonzero Banach space $Y$ and a compact set $K\subseteq G$, put
\[
 \alpha_\sigma(K)
 :=\inf_{y\in S(Y)}\operatorname{diam}\bigl(\sigma(K)y\bigr).
\]
Normalizing the image under any bounded equivariant isomorphism
$T:Y\to Z$ gives
\[
 \alpha_\tau(K)
 \le \|T\|\,\|T^{-1}\|\,\alpha_\sigma(K),
 \qquad
 \alpha_\sigma(K)
 \le \|T\|\,\|T^{-1}\|\,\alpha_\tau(K),
\]
where $\tau$ is the representation on $Z$.  Applying these inequalities
to $T=J$, $\sigma=\rho'$, and $\tau=\bar\rho$ shows that
$\alpha_{\rho'}(K)=0$ if and only if $\alpha_{\bar\rho}(K)=0$ for
every compact $K\subseteq G$.  The conclusion follows from
\Cref{eq:almost-invariant}.
\end{proof}

We use the following normalization repeatedly.

\begin{lemma}
\label{lem:normalization}
Let $X$ be a uniformly convex Banach space, let
$\rho:G\to O(X)$ be a group homomorphism, and let
$x\notin X^{\rho(G)}$.  Then there exists $a\in X^{\rho(G)}$ such that
\[
 \|x-a\|_X
 =\distop\bigl(x,X^{\rho(G)}\bigr)>0
\]
and the vector
\[
 v=\frac{x-a}{\distop\bigl(x,X^{\rho(G)}\bigr)}
\]
satisfies
\begin{equation}\label{eq:normalization}
 \|v\|_X=\distop\bigl(v,X^{\rho(G)}\bigr)=1,
 \qquad
 \sup_{g\in G}\|\rho(g)v-v\|_X\ge1.
\end{equation}
\end{lemma}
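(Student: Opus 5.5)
The statement to prove is \Cref{lem:normalization}. The plan is to combine existence of a nearest point, which comes from uniform convexity, with a short argument that a vector at distance one from the fixed space must have some displacement of size at least one.

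First, $X^{\rho(G)}$ is a closed subspace, since it is the intersection of the kernels of the bounded operators $\rho(g)-I$. A uniformly convex space is reflexive (Milman--Pettis) and strictly convex. Hence the distance from $x$ to the closed subspace $X^{\rho(G)}$ is attained at some $a\in X^{\rho(G)}$. If one prefers to avoid reflexivity, take a minimizing sequence $a_n$: uniform convexity, applied to the normalized vectors $(x-a_n)/\|x-a_n\|$ and $(x-a_k)/\|x-a_k\|$, shows that it is Cauchy. So $a$ is a limit in the closed subspace. The distance $d:=\distop(x,X^{\rho(G)})$ is positive because $x\notin X^{\rho(G)}$ and this subspace is closed.

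Next, set $v=(x-a)/d$. Then $\|v\|_X=1$. Since $a$ is fixed and $X^{\rho(G)}$ is a linear subspace, we have $\distop(v,X^{\rho(G)})=d^{-1}\distop(x-a,X^{\rho(G)})=d^{-1}\distop(x,X^{\rho(G)})=1$.

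The remaining inequality $\sup_{g}\|\rho(g)v-v\|\ge1$ is the main point. I would argue by contradiction. Suppose $\sup_g\|\rho(g)v-v\|=s<1$. The orbit $\rho(G)v$ is a bounded set that is invariant under $\rho(G)$. Consider the closed convex hull $C=\overline{\mathrm{conv}}\,\rho(G)v$. It is bounded and $\rho(G)$-invariant, and it lies in the closed ball of radius $s$ around $v$, because that ball is closed and convex. In a uniformly convex space, a bounded set has a unique Chebyshev center within the closed convex set $C$. I would take the point $c\in C$ minimizing $r(y)=\sup_{z\in C}\|y-z\|$. Existence follows from reflexivity and weak lower semicontinuity of the convex function $r$, and uniqueness from uniform convexity. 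Since each $\rho(g)$ is an isometry preserving $C$, $r(\rho(g)y)=r(y)$, so uniqueness forces $\rho(g)c=c$ for all $g$. Thus $c\in X^{\rho(G)}$ and $\|v-c\|\le s<1$. This contradicts $\distop(v,X^{\rho(G)})=1$. The expected obstacle is only checking the uniqueness of the Chebyshev center carefully: if $y_1\ne y_2$ both attain the minimal radius $r_0>0$, uniform convexity applied to $y_i-z$ gives $\|\tfrac{y_1+y_2}{2}-z\|\le r_0(1-\delta(\|y_1-y_2\|/r_0))$ uniformly in $z\in C$. Hence the midpoint has a strictly smaller radius, which is a contradiction. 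The case $r_0=0$ is trivial. Note that no continuity of $\rho$ is needed, matching the hypothesis that $\rho$ is merely a homomorphism.
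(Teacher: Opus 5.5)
Your proof is correct and follows essentially the same route as the paper. Both arguments take the closed convex hull $C$ of the orbit of $v$, choose a canonical point of $C$ that must be $\rho(G)$-fixed by uniqueness, and observe that $C$ lies in the closed ball of radius $\sup_g\|\rho(g)v-v\|_X$ about $v$. The only difference is the canonical point. The paper uses the unique element of minimal norm in $C$, which needs only reflexivity and strict convexity and is fixed because each $\rho(h)$ is a linear isometry with $\rho(h)C=C$. Your Chebyshev center needs the uniform-convexity uniqueness argument you sketch, but it would also work for affine isometric actions.
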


\begin{proof}
The fixed-vector space $X^{\rho(G)}$ is closed.  Since $X$ is
reflexive, the distance from $x$ to $X^{\rho(G)}$ is attained at some
$a\in X^{\rho(G)}$; it is positive because
$x\notin X^{\rho(G)}$.  Since $X^{\rho(G)}$ is a linear subspace,
$\|v\|_X=1$ and
\[
 \distop\bigl(v,X^{\rho(G)}\bigr)
 =
 \frac{\distop\bigl(x-a,X^{\rho(G)}\bigr)}
      {\distop\bigl(x,X^{\rho(G)}\bigr)}
 =1.
\]

Let $C=\overline{\operatorname{conv}}\{\rho(g)v:g\in G\}$, where the
closure is taken in norm.  The set $C$ is weakly compact, and the norm
has a unique minimizer $y\in C$, by reflexivity and strict convexity.
For every $h\in G$, the equality $\rho(h)C=C$ and uniqueness of $y$
give $\rho(h)y=y$.  Hence $y\in X^{\rho(G)}$.

Set $r=\sup_{g\in G}\|v-\rho(g)v\|_X$.  The closed ball with center
$v$ and radius $r$ contains the orbit of $v$ and therefore contains
$C$.  It follows that
\[
 1=\distop\bigl(v,X^{\rho(G)}\bigr)
 \le\|v-y\|_X
 \le r,
\]
which proves \Cref{eq:normalization}.
\end{proof}

\subsection{Power maps and Hilbert-space lifts}

Throughout this subsection, $(\Omega,\Sigma,\mu)$ is a measure space,
and $L_r(\mu)$ denotes $L_r(\Omega,\Sigma,\mu)$.  For an integer
$m\ge1$ and $x\in L_{2m}(\mu)$, we write $x^m$ for the pointwise
power $\omega\mapsto x(\omega)^m$.  Then $x^m\in L_2(\mu)$ and
$\|x^m\|_2=\|x\|_{2m}^m$.

We begin with three elementary estimates for the power map.

\begin{lemma}\label{lem:power-estimates}
Let $m\ge1$ be an integer.  For all $x,y\in L_{2m}(\mu)$,
\begin{equation}\label{eq:power-upper-general}
 \|x^m-y^m\|_2
 \le
 m\max\{\|x\|_{2m},\|y\|_{2m}\}^{m-1}
 \|x-y\|_{2m}.
\end{equation}
If $m$ is odd, then
\begin{equation}\label{eq:power-lower}
 \|x^m-y^m\|_2
 \ge 2^{-(m-1)}\|x-y\|_{2m}^m.
\end{equation}
If $m$ is even, then every $a,b\in\R$ satisfy
\begin{equation}\label{eq:scalar-lifting}
 |a-b|^{2m}
 \le
 4^m\bigl(
 |a^m-b^m|^2+|a|^m|a-b|^m
 \bigr).
\end{equation}
\end{lemma}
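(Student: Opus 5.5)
All three estimates are pointwise scalar inequalities, integrated where necessary. The plan is to prove each scalar inequality first and then pass to norms with Hölder's inequality or by direct integration.

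\emph{The upper bound \eqref{eq:power-upper-general}.} We start from the factorization
\[
a^m-b^m=(a-b)\sum_{j=0}^{m-1}a^jb^{m-1-j}.
\]
It gives the pointwise bound
\[
|x^m-y^m|\le |x-y|\sum_{j=0}^{m-1}|x|^j|y|^{m-1-j}.
\]
We take $L_2$ norms and apply Hölder's inequality to each term with exponents $2m$ (on $|x-y|$) and $2m/(m-1)$ (on $|x|^j|y|^{m-1-j}$), noting $\tfrac1{2m}+\tfrac{m-1}{2m}=\tfrac12$. A second Hölder step gives
\[
\bigl\||x|^j|y|^{m-1-j}\bigr\|_{2m/(m-1)}\le\|x\|_{2m}^j\|y\|_{2m}^{m-1-j}.
\]
Each of the $m$ terms is therefore at most $\max\{\|x\|_{2m},\|y\|_{2m}\}^{m-1}\|x-y\|_{2m}$, which is the claim. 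The case $m=1$ is trivial.

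\emph{The odd case \eqref{eq:power-lower}.} It suffices to prove the scalar inequality
\[
|a^m-b^m|\ge 2^{-(m-1)}|a-b|^m\qquad(m\text{ odd}).
\]
Squaring and integrating then give $\|x^m-y^m\|_2^2\ge 4^{-(m-1)}\|x-y\|_{2m}^{2m}$, and taking square roots yields the stated bound. To prove the scalar inequality, note that $t\mapsto t^m$ is odd and increasing, so we may assume $a>b$.
\begin{itemize}
\item If $a$ and $b$ have opposite signs (say $a\ge0\ge b$), then $a^m-b^m=a^m+|b|^m$. Convexity of $t^m$ on $[0,\infty)$ gives $a^m+|b|^m\ge 2\bigl(\tfrac{a+|b|}{2}\bigr)^m=2^{1-m}(a-b)^m$.
\item If $a$ and $b$ have the same sign (say $a>b\ge0$), write $a=b+d$. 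Then $a^m-b^m\ge d^m\ge 2^{1-m}d^m$, by superadditivity of $t^m$ on $[0,\infty)$.
\end{itemize}
The negative same-sign case follows by the symmetry $(a,b)\mapsto(-b,-a)$.

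\emph{The even case \eqref{eq:scalar-lifting}.} This is the only delicate point, because $a^m=b^m$ may hold with $b=-a$. Set $d=|a-b|$. We split according to the size of $d$ relative to $|a|$.
\begin{itemize}
\item If $d\ge |a|/2$, we do not use the term $|a^m-b^m|^2$ at all. It suffices to have $d^{2m}\le 4^m|a|^md^m$, i.e.\ $d^m\le 4^m|a|^m$, i.e.\ $d\le 4|a|$. This covers $|a|/2\le d\le 4|a|$.
\item If $d>4|a|$, then $|b|\ge d-|a|>3|a|$. Hence
\[
|a^m-b^m|\ge |b|^m-|a|^m\ge |b|^m(1-3^{-m}),
\]
and $|b|\ge d-|a|\ge \tfrac34 d$. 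So $|a^m-b^m|^2\ge \bigl((1-3^{-m})(3/4)^m\bigr)^2d^{2m}$. We then check that $4^m\bigl((1-3^{-m})(3/4)^m\bigr)^2\ge1$. This is $\bigl(2(1-3^{-m})(3/4)^m\bigr)^2\ge1$, which holds for $m=2$ (it equals $(2\cdot\tfrac89\cdot\tfrac9{16})^2=1$) but fails for large $m$. This threshold therefore needs adjusting: if the constant $4$ is too tight, we move the split point from $4|a|$ to a larger multiple $K|a|$.
\item In the remaining regime $d<|a|/2$, $a$ and $b$ have the same sign and comparable size, $|b|\ge |a|/2$. The mean value theorem then gives
\[
|a^m-b^m|\ge m\min(|a|,|b|)^{m-1}d\ge m2^{1-m}|a|^{m-1}d\ge m2^{1-m}(2d)^{m-1}d=m\,d^m.
\]
Squaring gives $d^{2m}\le |a^m-b^m|^2$.
\end{itemize}

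The main obstacle is calibrating the case thresholds so that the constant $4^m$ suffices for all even $m$. I would try a cleaner route here. Since $m$ is even, $a^m-b^m=(a^2-b^2)P$ with $P=\sum_{k=0}^{m/2-1} a^{2k}b^{m-2-2k}\ge\max(a,b)^{m-2}$. Also $a^2-b^2=(a-b)(a+b)$. The bad factor is $|a+b|$, and $|a+b|\ge d-2|a|$. So whenever $d\ge 4|a|$ we get $|a+b|\ge d/2$ and $\max(|a|,|b|)\ge|b|\ge d/2$. This gives $|a^m-b^m|\ge d\cdot\tfrac d2\cdot(d/2)^{m-2}=2^{1-m}d^m$, so $4^m|a^m-b^m|^2\ge 4^m4^{1-m}d^{2m}\ge d^{2m}$. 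This fixes the large-$d$ regime with exactly the constant $4^m$, and the other regimes are handled as above.
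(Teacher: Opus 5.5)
Your proof is correct, and for \eqref{eq:power-upper-general} and \eqref{eq:power-lower} it is the paper's argument: the factorization plus H\"older, and the same-sign versus opposite-sign cases with convexity. For \eqref{eq:scalar-lifting} the paper uses exactly the split you tried first. If $|a|\ge d/4$, the term $|a|^m|a-b|^m$ suffices. If $|a|<d/4$, then $|b|>3d/4$, so $|a^m-b^m|\ge\bigl((3/4)^m-(1/4)^m\bigr)d^m\ge 2^{-m}d^m$ because $3^m-1\ge 2^m$.

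You discarded this route because of an arithmetic slip: $4^mX^2=(2^mX)^2$, not $(2X)^2$. The quantity to check is therefore $\bigl((3/2)^m-(1/2)^m\bigr)^2$, which equals $4$ at $m=2$ and increases with $m$, so the direct estimate never fails.

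Your replacement via $a^m-b^m=(a-b)(a+b)P$ is still valid. You should write $P\ge\max(|a|,|b|)^{m-2}$ with absolute values, since that is what you use afterwards. So the final argument stands; it just takes an unnecessary detour.

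The mean-value regime $d<|a|/2$ is also redundant. The bound $d^{2m}\le 4^m|a|^md^m$ holds whenever $d\le 4|a|$, so two cases suffice.
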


\begin{proof}
Put $M=\max\{\|x\|_{2m},\|y\|_{2m}\}$.  The factorization
$x^m-y^m=(x-y)\sum_{j=0}^{m-1}x^{m-1-j}y^j$ and H\"older's
inequality give
\[
 \|x^m-y^m\|_2
 \le
 \sum_{j=0}^{m-1}
 \|x\|_{2m}^{m-1-j}\|y\|_{2m}^j\|x-y\|_{2m}
 \le mM^{m-1}\|x-y\|_{2m},
\]
which proves \Cref{eq:power-upper-general}.

Suppose that $m$ is odd.  If $a,b\in\R$ have the same sign, then
$|a^m-b^m|\ge|a-b|^m$.  If they have opposite signs, convexity gives
$|a-b|^m\le2^{m-1}(|a|^m+|b|^m)
=2^{m-1}|a^m-b^m|$.  Thus
$|a-b|^m\le2^{m-1}|a^m-b^m|$ in both cases.  Squaring, integrating,
and taking square roots yields \Cref{eq:power-lower}.

Suppose now that $m$ is even, fix $a,b\in\R$, and put $d=|a-b|$.
If $|a|\ge d/4$, then
$d^{2m}\le4^m|a|^md^m$, which gives
\Cref{eq:scalar-lifting}.  If $|a|<d/4$, then $|b|>3d/4$ and
\[
 |a^m-b^m|
 \ge
 \left[\left(\frac34\right)^m-\left(\frac14\right)^m\right]d^m
 \ge 2^{-m}d^m,
\]
where the last inequality follows from $3^m-1\ge2^m$.  Hence
$d^{2m}\le4^m|a^m-b^m|^2$, proving
\Cref{eq:scalar-lifting}.
\end{proof}

Fix an integer $m\ge1$ and a closed subspace
$X\subseteq L_{2m}(\mu)$.  Define
\begin{equation}\label{eq:feature-space}
 \cH_m(X)
 :=
 \overline{\spanop}\{x^m:x\in X\}
 \subseteq L_2(\mu).
\end{equation}
Equipped with the real inner product
$\langle f,h\rangle_{L_2}:=\int_\Omega fh\,d\mu$
for $f,h\in\cH_m(X)$, this is a Hilbert space.

For $x,y\in X$, the coefficient of $t^m$ in the polynomial
$t\mapsto\|x+ty\|_{2m}^{2m}$ is
\[
 \binom{2m}{m}\langle x^m,y^m\rangle_{L_2}.
\]
Consequently, every $U\in O(X)$ preserves these inner products.
This yields the following lifting lemma.

\begin{lemma}\label{lem:lifting}
Let $m\ge1$ be an integer, let $X$ be a closed subspace of
$L_{2m}(\mu)$, and let $\cH_m(X)$ be defined by
\Cref{eq:feature-space}.  For every $U\in O(X)$, there exists a unique
operator
\[
 \Pi_m(U)\in O(\cH_m(X))
\]
such that
\begin{equation}\label{eq:lift-action}
 \Pi_m(U)(x^m)=(Ux)^m
 \qquad(x\in X).
\end{equation}
Moreover, the map
\[
 \Pi_m:O(X)\longrightarrow O(\cH_m(X)),
 \qquad U\longmapsto\Pi_m(U),
\]
is a group homomorphism.  Consequently, if
$\rho:G\to O(X)$ is a strongly continuous linear isometric
representation, then
\[
 \pi=\Pi_m\circ\rho:G\longrightarrow O(\cH_m(X))
\]
is a strongly continuous orthogonal representation.
\end{lemma}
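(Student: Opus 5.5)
We define $\Pi_m(U)$ first on the linear span of $\{x^m:x\in X\}$, then show it is a well-defined isometry on that span, extend it by continuity, and finally verify surjectivity, the homomorphism property, and strong continuity.

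\textbf{Step 1: polarization.} Fix $U\in O(X)$. For $x,y\in X$ and $t\in\R$ we have $\|U(x+ty)\|_{2m}^{2m}=\|x+ty\|_{2m}^{2m}$. Both sides are polynomials in $t$ of degree $2m$, namely $\int(x+ty)^{2m}\,d\mu$ and $\int(Ux+tUy)^{2m}\,d\mu$, because $2m$ is even. Comparing the coefficients of $t^m$ gives
\[
 \langle (Ux)^m,(Uy)^m\rangle_{L_2}=\langle x^m,y^m\rangle_{L_2}
 \qquad(x,y\in X).
\]

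\textbf{Step 2: definition on the span.} For a finite combination $f=\sum_i c_i x_i^m$, set $\Pi_m(U)f=\sum_i c_i (Ux_i)^m$. By bilinearity and Step~1,
\[
 \Bigl\|\sum_i c_i(Ux_i)^m\Bigr\|_2^2
 =\sum_{i,j}c_ic_j\langle x_i^m,x_j^m\rangle
 =\Bigl\|\sum_i c_i x_i^m\Bigr\|_2^2 .
\]
Hence, if $\sum_i c_ix_i^m=0$, then $\sum_i c_i(Ux_i)^m=0$, so the map is well defined. It is linear and isometric on $\spanop\{x^m:x\in X\}$, and it extends uniquely by continuity to a linear isometry of $\cH_m(X)$. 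This extension is the only continuous linear map satisfying \Cref{eq:lift-action}, since it is determined on a dense set; this gives uniqueness.

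\textbf{Step 3: surjectivity and the homomorphism property.} For $U,V\in O(X)$, both $\Pi_m(UV)$ and $\Pi_m(U)\Pi_m(V)$ send $x^m$ to $(UVx)^m$. By uniqueness they coincide, and likewise $\Pi_m(\mathrm{id})=\mathrm{id}$. Therefore $\Pi_m(U)\Pi_m(U^{-1})=\Pi_m(U^{-1})\Pi_m(U)=\mathrm{id}$, so $\Pi_m(U)$ is surjective and lies in $O(\cH_m(X))$, and $\Pi_m$ is a group homomorphism.

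\textbf{Step 4: strong continuity.} This is the only step needing an estimate, and it is the main point to check. Fix $x\in X$ and put $\pi=\Pi_m\circ\rho$. By \Cref{eq:power-upper-general} and $\|\rho(g)x\|_{2m}=\|x\|_{2m}$,
\[
 \|\pi(g)x^m-\pi(h)x^m\|_2
 \le m\|x\|_{2m}^{m-1}\|\rho(g)x-\rho(h)x\|_{2m}.
\]
So the orbit map $g\mapsto\pi(g)x^m$ is continuous. By linearity, orbit maps are continuous for every vector in the span. For general $f\in\cH_m(X)$, the operators $\pi(g)$ are uniformly bounded (they are isometries), so a standard $\varepsilon/3$ argument using density of the span gives continuity of $g\mapsto\pi(g)f$. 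Hence $\pi$ is a strongly continuous orthogonal representation.
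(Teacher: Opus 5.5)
Your proposal is correct and follows the paper's proof essentially step for step. Both arguments take the $t^m$ coefficient to get the Gram identity, use it for well-definedness and isometry on the span, extend by density, and obtain strong continuity from \eqref{eq:power-upper-general} plus a density argument; the only cosmetic difference is that you derive surjectivity from the homomorphism property and $\Pi_m(\mathrm{id})=\mathrm{id}$, whereas the paper checks directly that $\Pi_m(U)$ and $\Pi_m(U^{-1})$ are mutual inverses on pure powers.
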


\begin{proof}
Fix $U\in O(X)$.  Since
$\|Ux+tUy\|_{2m}^{2m}=\|x+ty\|_{2m}^{2m}$ for $x,y\in X$ and
$t\in\R$, the corresponding polynomials in $t$ coincide.  Comparing
their coefficients of $t^m$ gives
\begin{equation}\label{eq:gram-invariance}
 \langle (Ux)^m,(Uy)^m\rangle_{L_2}
 =\langle x^m,y^m\rangle_{L_2}.
\end{equation}

Let $\cH_m^0(X)=\spanop\{x^m:x\in X\}$, which is dense in
$\cH_m(X)$.  For $n\ge1$, $a_1,\ldots,a_n\in\R$, and
$x_1,\ldots,x_n\in X$, define
\begin{equation}\label{eq:algebraic-lift}
 \Pi_m^0(U)\left(\sum_{j=1}^n a_jx_j^m\right)
 :=
 \sum_{j=1}^n a_j(Ux_j)^m.
\end{equation}
By \Cref{eq:gram-invariance},
\[
 \left\|
 \sum_{j=1}^n a_j(Ux_j)^m
 \right\|_2^2
 =
 \sum_{i,j=1}^n
 a_i a_j
 \langle x_i^m,x_j^m\rangle_{L_2}
 =
 \left\|
 \sum_{j=1}^n a_jx_j^m
 \right\|_2^2.
\]
In particular, every combination representing zero is sent to zero.
Thus \Cref{eq:algebraic-lift} is well-defined and defines a linear
isometry on $\cH_m^0(X)$.  It therefore extends uniquely to a linear
isometry
\[
 \Pi_m(U):\cH_m(X)\longrightarrow\cH_m(X).
\]
By construction,
\[
 \Pi_m(U)(x^m)=\Pi_m^0(U)(x^m)=(Ux)^m,
\]
so \Cref{eq:lift-action} holds.

Applying the same construction to $U^{-1}$ gives
\[
 \Pi_m(U^{-1})\Pi_m(U)(x^m)=x^m
 =\Pi_m(U)\Pi_m(U^{-1})(x^m)
 \qquad(x\in X).
\]
Since the pure powers span a dense subspace of $\cH_m(X)$, the two
operators are mutual inverses on $\cH_m(X)$.  Thus
$\Pi_m(U)\in O(\cH_m(X))$.  Any operator satisfying
\Cref{eq:lift-action} agrees with $\Pi_m(U)$ on the dense subspace
$\cH_m^0(X)$, which proves uniqueness.

For $U,V\in O(X)$ and $x\in X$,
\[
 \Pi_m(U)\Pi_m(V)(x^m)=(UVx)^m=\Pi_m(UV)(x^m).
\]
Another density argument gives
$\Pi_m(U)\Pi_m(V)=\Pi_m(UV)$.  Hence $\Pi_m$ is a group
homomorphism.

Finally, let $\rho:G\to O(X)$ be strongly continuous and put
$\pi=\Pi_m\circ\rho$.  This is an orthogonal representation.  For
$x\in X$ and $g,g_0\in G$, \Cref{eq:power-upper-general} gives
\[
 \|\pi(g)x^m-\pi(g_0)x^m\|_2
 =\|(\rho(g)x)^m-(\rho(g_0)x)^m\|_2
 \le
 m\|x\|_{2m}^{m-1}
 \|\rho(g)x-\rho(g_0)x\|_{2m}.
\]
The right-hand side tends to zero as $g\to g_0$.  Hence the orbit
maps are continuous on the dense subspace $\cH_m^0(X)$.  If
$\xi\in\cH_m(X)$ and $\eta\in\cH_m^0(X)$, then
\[
 \|\pi(g)\xi-\pi(g_0)\xi\|_2
 \le
 2\|\xi-\eta\|_2
 +\|\pi(g)\eta-\pi(g_0)\eta\|_2.
\]
Approximating $\xi$ by such vectors $\eta$ proves continuity of its
orbit map at $g_0$.  Thus $\pi$ is strongly continuous.
\end{proof}

\subsection{Spectral-gap constants and reduction of the exponent}

We now use the Hilbert-space lift constructed above to obtain uniform
spectral-gap estimates.  For the remainder of this section, fix a
locally compact second countable group $G$ with property~$(T)$ and a
Hilbert Kazhdan pair $(Q,\kappa_2)$ for $G$, in the sense of
\Cref{eq:hilbert-gap}.

For $1<s<\infty$, define the $Q$-spectral-gap constant of $G$ on
closed subspaces of $L_s$-spaces by
\begin{equation}\label{eq:Kr-definition}
 \mathsf K_s=\mathsf K_s(G,Q):=
 \inf
 \frac{\displaystyle
       \max_{q\in Q}\|\rho(q)x-x\|_s}
      {\displaystyle
       \distop\bigl(x,X^{\rho(G)}\bigr)}.
\end{equation}
Here the infimum is taken jointly over all tuples
\[
 ((\Omega,\Sigma,\mu),X,\rho,x)
\]
satisfying the following conditions:
\begin{itemize}
\item $(\Omega,\Sigma,\mu)$ is a $\sigma$-finite measure space;
\item $X$ is a closed subspace of $L_s(\mu)$;
\item $\rho:G\to O(X)$ is a strongly continuous linear isometric
      representation;
\item $x\in X\setminus X^{\rho(G)}$, where
\[
 X^{\rho(G)}
 =
 \{y\in X:\rho(g)y=y\text{ for every }g\in G\}.
\]
\end{itemize}

The quotient in \Cref{eq:Kr-definition} is well-defined.  Indeed, the
maximum in the numerator exists because $Q$ is compact and the orbit
map $q\mapsto\rho(q)x$ is norm-continuous.  Moreover,
$X^{\rho(G)}$ is closed, so the denominator is positive.  We retain
the convention $\inf\varnothing=+\infty$ introduced above and use
$1/(+\infty)=0$ whenever this case occurs below.
When admissible tuples exist, $\mathsf K_s(G,Q)$ is the optimal
constant in the corresponding uniform spectral-gap inequality.  Its
positivity is not part of the definition: a priori,
$\mathsf K_s(G,Q)$ may be zero.

At exponent $2$, every closed subspace of an $L_2$-space is a Hilbert
space, and every linear isometric representation on it is
orthogonal.  Hence \Cref{eq:hilbert-gap}, applied to each admissible
tuple in \Cref{eq:Kr-definition}, gives
\begin{equation}\label{eq:K2-lower}
 \mathsf K_2\ge\kappa_2.
\end{equation}

We shall prove that $\mathsf K_s>0$ for every even integer $s\ge2$.
The first step treats $s=2m$ with $m$ odd; these estimates will serve
as the initial cases for the exponent-reduction argument.

\begin{lemma}\label{lem:odd}
For every odd integer $m\ge1$,
\begin{equation}\label{eq:odd-constant}
 \mathsf K_{2m}
 \ge
 \frac{\kappa_2}{m2^m}.
\end{equation}
\end{lemma}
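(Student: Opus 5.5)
Fix an admissible tuple $((\Omega,\Sigma,\mu),X,\rho,x)$ with $X\subseteq L_{2m}(\mu)$, $m$ odd, and $x\notin X^{\rho(G)}$. By \Cref{lem:normalization}, applied in the uniformly convex space $X$, it suffices to treat the normalized vector $v$. It satisfies $\|v\|_{2m}=\distop(v,X^{\rho(G)})=1$ and $\sup_{g}\|\rho(g)v-v\|_{2m}\ge1$. The numerator and denominator of \Cref{eq:Kr-definition} both scale correctly under $x\mapsto(x-a)/d$, because $\rho(q)a=a$. So the goal becomes
\[
 \max_{q\in Q}\|\rho(q)v-v\|_{2m}\ge \frac{\kappa_2}{m2^m}.
\]

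Next we pass to the Hilbert lift $\pi=\Pi_m\circ\rho$ on $\cH_m(X)$ from \Cref{lem:lifting}, with $\xi=v^m$, so $\|\xi\|_2=1$. The upper bound \Cref{eq:power-upper-general}, with both norms equal to $1$, gives
\[
 \|\pi(q)\xi-\xi\|_2=\|(\rho(q)v)^m-v^m\|_2\le m\|\rho(q)v-v\|_{2m}.
\]
Hence $\max_q\|\rho(q)v-v\|_{2m}\ge \kappa_2\,\distop(\xi,\cH^{\pi(G)})/m$ by \Cref{eq:hilbert-gap}. It remains to show
\[
 \distop(\xi,\cH^{\pi(G)})\ge 2^{-m},
\]
and this is the main step.

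Let $P$ be the orthogonal projection onto $\cH^{\pi(G)}$ and set $\eta=P\xi$. Using the fact that $\pi(g)$ commutes with $P$, we get
\[
 \|\pi(g)\xi-\xi\|_2=\|\pi(g)(\xi-\eta)-(\xi-\eta)\|_2\le 2\|\xi-\eta\|_2 .
\]
On the other hand, by \Cref{eq:power-lower},
\[
 \|\pi(g)\xi-\xi\|_2=\|(\rho(g)v)^m-v^m\|_2\ge 2^{-(m-1)}\|\rho(g)v-v\|_{2m}^m .
\]
Taking the supremum over $g$ and using $\sup_g\|\rho(g)v-v\|_{2m}\ge1$ from \Cref{eq:normalization} yields
\[
 2\,\distop(\xi,\cH^{\pi(G)})\ge 2^{-(m-1)},
\]
that is, $\distop(\xi,\cH^{\pi(G)})\ge 2^{-m}$, as required. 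Combining this with the previous paragraph gives $\max_q\|\rho(q)v-v\|_{2m}\ge \kappa_2/(m2^m)$. Taking the infimum over all admissible tuples proves \Cref{eq:odd-constant}.

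The point I expect to need care is the passage from the sup-displacement bound in \Cref{eq:normalization} to a distance-to-fixed-vectors bound in $\cH_m(X)$. The route is to go through the displacement $\sup_g\|\pi(g)\xi-\xi\|$ rather than trying to compare $\cH^{\pi(G)}$ with $X^{\rho(G)}$ directly. That comparison is unavailable, since fixed vectors of $\pi$ need not be $m$-th powers. Oddness of $m$ enters only through \Cref{eq:power-lower}.
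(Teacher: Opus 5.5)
Your proof is correct and matches the paper's argument step for step. You normalize via \Cref{lem:normalization}, lift to $\cH_m(X)$, and combine \Cref{eq:power-lower} with the sup-displacement normalization to get $\distop\bigl(v^m,\cH_m(X)^{\pi(G)}\bigr)\ge 2^{-m}$; you then close with \Cref{eq:hilbert-gap} and \Cref{eq:power-upper-general}. The only cosmetic differences are that you use the specific fixed vector $P\xi$ where the paper uses an arbitrary one (only the invariance of $P\xi$ matters, not commutation with $P$), and you omit the trivial case where no admissible tuple exists.
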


\begin{proof}
If the class of tuples in \Cref{eq:Kr-definition} at exponent $2m$ is
empty, the conclusion follows from the convention made after that
definition.  Otherwise, fix a tuple
\[
 ((\Omega,\Sigma,\mu),X,\rho,x)
\]
occurring in the infimum in \Cref{eq:Kr-definition} at exponent $2m$.
Put
\[
 F=X^{\rho(G)}
 \qquad\text{and}\qquad
 d=\distop(x,F).
\]
Since $L_{2m}(\mu)$ is uniformly convex, so is its closed subspace
$X$.  By \Cref{lem:normalization}, there exists $a\in F$ such that
$\|x-a\|_{2m}=d$ and the vector
\[
 v=\frac{x-a}{d}
\]
satisfies
\[
 \|v\|_{2m}
 =
 \distop(v,F)
 =
 1,
 \qquad
 \sup_{g\in G}\|\rho(g)v-v\|_{2m}\ge1.
\]
Since $a\in F$, for every $g\in G$ we have
\[
 \|\rho(g)v-v\|_{2m}
 =
 \frac{\|\rho(g)x-x\|_{2m}}{d}.
\]

Let
\[
 \pi=\Pi_m\circ\rho:G\longrightarrow O(\cH_m(X))
\]
be the lifted representation from \Cref{lem:lifting}, and put
$u=v^m\in\cH_m(X)$.  Then $\|u\|_2=1$.

By \Cref{eq:lift-action,eq:power-lower}, for every $g\in G$,
\[
\begin{aligned}
 \|\pi(g)u-u\|_2
 &=
 \|(\rho(g)v)^m-v^m\|_2\\
 &\ge
 2^{-(m-1)}
 \|\rho(g)v-v\|_{2m}^m.
\end{aligned}
\]
Taking the supremum over $g\in G$ and using
the normalization of $v$, we obtain
\[
 \sup_{g\in G}\|\pi(g)u-u\|_2
 \ge
 2^{-(m-1)}.
\]

For $h\in\cH_m(X)^{\pi(G)}$ and $g\in G$, orthogonality of
$\pi(g)$ gives
\[
 \|\pi(g)u-u\|_2
 \le
 \|\pi(g)(u-h)\|_2+\|u-h\|_2
 =2\|u-h\|_2.
\]
Taking the supremum over $g$ and then the infimum over $h$, and using
the preceding lower bound, yields
\begin{equation}\label{eq:odd-Hilbert-distance}
 \distop\bigl(u,\cH_m(X)^{\pi(G)}\bigr)
 \ge
 \frac12\sup_{g\in G}\|\pi(g)u-u\|_2
 \ge 2^{-m}.
\end{equation}

On the other hand, $\|v\|_{2m}=\|\rho(q)v\|_{2m}=1$ for every
$q\in Q$.  Applying \Cref{eq:hilbert-gap} to $\pi$ and $u$, and
then using the preceding estimate and
\Cref{eq:power-upper-general}, we obtain
\begin{equation}\label{eq:odd-displacement-lower}
\begin{aligned}
 \kappa_2 2^{-m}
 &\le
 \kappa_2\,
 \distop\bigl(u,\cH_m(X)^{\pi(G)}\bigr)\\
 &\le
 \max_{q\in Q}\|\pi(q)u-u\|_2\\
 &=
 \max_{q\in Q}\|(\rho(q)v)^m-v^m\|_2\\
 &\le
 m\max_{q\in Q}\|\rho(q)v-v\|_{2m}.
\end{aligned}
\end{equation}
Finally, the displacement identity above gives
\[
 \frac{\max_{q\in Q}\|\rho(q)x-x\|_{2m}}
      {\distop\bigl(x,X^{\rho(G)}\bigr)}
 =
 \max_{q\in Q}\|\rho(q)v-v\|_{2m}
 \ge
 \frac{\kappa_2}{m2^m}.
\]
Taking the infimum over all tuples in
\Cref{eq:Kr-definition} proves the lemma.
\end{proof}

We now turn to the remaining case in which $m\ge2$ is even.  The
argument uses the fact that every $m$th power is nonnegative.  The
next lemma shows that its orthogonal projection onto the fixed vectors
of the lifted representation remains nonnegative, and estimates the
projection error.  This invariant function will define a weighted
measure through which the exponent is reduced from $2m$ to $m$.

\begin{lemma}
\label{lem:positive-projection}
Let $m\ge2$ be an even integer.  Let
$(\Omega,\Sigma,\mu)$ be a $\sigma$-finite measure space, let $X$ be
a closed subspace of $L_{2m}(\mu)$, and let
\[
 \rho:G\longrightarrow O(X)
\]
be a strongly continuous linear isometric representation.  Let
\[
 \pi=\Pi_m\circ\rho:
 G\longrightarrow O(\cH_m(X))
\]
be the lifted representation from \Cref{lem:lifting}, and let $P$ be
the orthogonal projection of $\cH_m(X)$ onto
$\cH_m(X)^{\pi(G)}$.  Then, for every $v\in S(X)$,
\[
 P(v^m)\ge0
 \qquad\text{$\mu$-almost everywhere},
\]
and
\begin{equation}\label{eq:positive-projection-error}
 \|v^m-P(v^m)\|_2
 \le
 \frac{m}{\kappa_2}
 \max_{q\in Q}\|\rho(q)v-v\|_{2m}.
\end{equation}
\end{lemma}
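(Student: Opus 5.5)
The estimate \eqref{eq:positive-projection-error} is the easier half, and I would prove it first. Put $u=v^m\in\cH_m(X)$. Since $P$ is the orthogonal projection onto $\cH_m(X)^{\pi(G)}$, we have $\|u-Pu\|_2=\distop\bigl(u,\cH_m(X)^{\pi(G)}\bigr)$. By \Cref{lem:lifting}, $\pi$ is a strongly continuous orthogonal representation, so the Hilbert Kazhdan inequality \eqref{eq:hilbert-gap} gives
\[
 \kappa_2\|u-Pu\|_2\le\max_{q\in Q}\|\pi(q)u-u\|_2
 =\max_{q\in Q}\|(\rho(q)v)^m-v^m\|_2 .
\]
Because $\|\rho(q)v\|_{2m}=\|v\|_{2m}=1$, the bound \eqref{eq:power-upper-general} shows that the right-hand side is at most $m\max_{q\in Q}\|\rho(q)v-v\|_{2m}$. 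Dividing by $\kappa_2$ gives \eqref{eq:positive-projection-error}. This step is exactly parallel to \eqref{eq:odd-displacement-lower}.

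For positivity I would realise $Pu$ as a limit of averages of the orbit of $u$. Let $C=\overline{\operatorname{conv}}\{\pi(g)u:g\in G\}\subseteq\cH_m(X)$. The first claim is that $Pu\in C$. Since $P$ commutes with each $\pi(g)$ and fixes invariant vectors, $P\pi(g)u=Pu$, so $P$ is constant, equal to $Pu$, on $C$. By the argument of \Cref{lem:normalization} in the Hilbert space $\cH_m(X)$, the unique minimal-norm element $y$ of $C$ is $\pi(G)$-fixed. Then $y=Py=Pu$.

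Each orbit point $\pi(g)u=(\rho(g)v)^m$ is a nonnegative function, because $m$ is even. So every finite convex combination of orbit points lies in the cone $L_2(\mu)_+$. This cone is closed in $L_2(\mu)$, since $L_2$-convergence implies a.e. convergence along a subsequence. Hence $C\subseteq L_2(\mu)_+$, and in particular $Pu\ge0$ $\mu$-almost everywhere.

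The main obstacle is the identification $Pu\in C$, specifically that the minimal-norm point of $C$ is the projection and not just some invariant vector. The argument above handles this. An alternative is the mean ergodic theorem for $\pi$ applied over an amenable subset, but no such subset exists in general. I would therefore rely on the convex-hull and uniqueness argument, checking that $C$ is invariant under each $\pi(h)$ and that $P$ is continuous, so that $P$ is indeed constant on the closure.
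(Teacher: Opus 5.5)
Your proposal is correct and follows essentially the same route as the paper. The error bound comes from the Hilbert Kazhdan inequality \eqref{eq:hilbert-gap} for $\pi$ combined with \eqref{eq:power-upper-general}. Positivity comes from identifying $P(v^m)$ as the unique minimal-norm, hence $\pi(G)$-fixed, point of the closed convex hull of the nonnegative orbit $\{(\rho(g)v)^m : g\in G\}$, on which $P$ is constant; this hull lies in the closed cone of nonnegative $L_2$-functions.
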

\begin{proof}
Fix $v\in S(X)$, and set $u=v^m$ and
$F=\cH_m(X)^{\pi(G)}$.  By \Cref{lem:lifting}, $\pi$ is a strongly
continuous orthogonal representation on $\cH_m(X)$.  Since $m$ is
even, for every $g\in G$,
\[
 \pi(g)u=(\rho(g)v)^m=|\rho(g)v|^m\ge0
 \qquad\text{$\mu$-almost everywhere}.
\]
Thus the orbit of $u$ consists of nonnegative functions.

Let
\[
 C=\overline{\operatorname{co}}^{\,\|\cdot\|_2}
   \{\pi(g)u:g\in G\}
 \subseteq\cH_m(X).
\]
The subspace $F^\perp$ is $\pi(G)$-invariant, because $\pi$ is
orthogonal and fixes $F$ pointwise.  Consequently,
\[
 P\pi(g)=P
 \qquad(g\in G).
\]
It follows by linearity and continuity of $P$ that
\[
 Pc=Pu
 \qquad(c\in C),
\]
and hence $C\subseteq Pu+F^\perp$.

By the Hilbert-space projection theorem, $C$ has a unique element
$y$ of minimal norm.  Since $\pi(g)C=C$ and $\pi(g)$ is orthogonal,
uniqueness gives $\pi(g)y=y$ for every $g\in G$; thus $y\in F$.
On the other hand, $y\in C\subseteq Pu+F^\perp$.  Since $Pu\in F$,
we conclude that
\begin{equation}\label{eq:positive-projection-in-hull}
 y=Pu=P(v^m)\in C.
\end{equation}

The positive cone
\[
 L_2^+(\mu)
 :=\{f\in L_2(\mu):f\ge0\ \text{$\mu$-almost everywhere}\}
\]
is closed and convex.  Since the orbit of $u$ is contained in
$L_2^+(\mu)$, so is $C$.  Hence
\Cref{eq:positive-projection-in-hull} gives
\[
 P(v^m)\ge0
 \qquad\text{$\mu$-almost everywhere}.
\]

Finally, since $P$ is the orthogonal projection onto $F$,
\[
 \distop(v^m,F)=\|v^m-P(v^m)\|_2.
\]
Applying
\Cref{eq:hilbert-gap,eq:lift-action,eq:power-upper-general} and using
$\|\rho(q)v\|_{2m}=\|v\|_{2m}=1$, we obtain
\begin{equation}\label{eq:positive-projection-gap}
\begin{aligned}
 \kappa_2\|v^m-P(v^m)\|_2
 &\le
 \max_{q\in Q}\|\pi(q)(v^m)-v^m\|_2\\
 &=
 \max_{q\in Q}\|(\rho(q)v)^m-v^m\|_2\\
 &\le
 m\max_{q\in Q}\|\rho(q)v-v\|_{2m}.
\end{aligned}
\end{equation}
This proves \Cref{eq:positive-projection-error}.
\end{proof}

The next lemma is the key exponent-reduction step.  The invariant
function obtained in \Cref{lem:positive-projection} is used as a
density, and the original action descends to a closed subspace of the
resulting weighted $L_m$-space.

\begin{lemma}\label{lem:descent}
Under the assumptions and notation of
\Cref{lem:positive-projection}, fix $v\in S(X)$, put $z=P(v^m)$,
and define the measure $\nu$ on $(\Omega,\Sigma)$ by $d\nu=z\,d\mu$.
Let
\[
 J_z:X\longrightarrow L_m(\nu),
 \qquad
 J_zx=[x]_\nu,
 \qquad
 E_z=\overline{J_z(X)}^{\,L_m(\nu)}.
\]
Then $\nu$ is $\sigma$-finite, $J_z$ is a linear contraction, and
there exists a unique strongly continuous linear isometric representation
\[
 \rho_z:G\longrightarrow O(E_z)
\]
such that
\begin{equation}\label{eq:descended-action}
 \rho_z(g)J_zx=J_z(\rho(g)x)
 \qquad(g\in G,\ x\in X).
\end{equation}
\end{lemma}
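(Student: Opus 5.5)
The plan is to verify the properties of $\nu$ and $J_z$ directly, and then obtain $\rho_z$ by extending an isometric action from the dense subspace $J_z(X)$. The crucial input is that, for even $m$, the weighted $L_m(\nu)$-norm of $J_zx$ is an inner product in $\cH_m(X)$ against the $\pi(G)$-invariant vector $z$.

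First, by \Cref{lem:positive-projection} we have $z\ge0$ $\mu$-a.e., so $\nu$ is a positive measure. Since $z\in L_2(\mu)$, Cauchy--Schwarz gives $\nu(A)\le\|z\|_2\,\mu(A)^{1/2}$. Hence every set of finite $\mu$-measure has finite $\nu$-measure, and $\sigma$-finiteness of $\mu$ passes to $\nu$. Because $\nu\ll\mu$, the map $x\mapsto[x]_\nu$ is well defined on $\mu$-classes and is linear. For the contraction bound, H\"older's inequality gives
\[
\|J_zx\|_{L_m(\nu)}^m=\int|x|^m z\,d\mu\le\||x|^m\|_2\,\|z\|_2=\|x\|_{2m}^m\|z\|_2 .
\]
Here $\|z\|_2=\|P(v^m)\|_2\le\|v^m\|_2=\|v\|_{2m}^m=1$, since $P$ is an orthogonal projection.

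The key step, which I expect to be the main point, is the invariance of the weighted norm. Since $m$ is even, $|x|^m=x^m\in\cH_m(X)$, and therefore $\|J_zx\|_{L_m(\nu)}^m=\langle x^m,z\rangle_{L_2}$. By \Cref{eq:lift-action}, $(\rho(g)x)^m=\pi(g)x^m$. Since $\pi(g)$ is orthogonal and $z\in\cH_m(X)^{\pi(G)}$, we get
\[
\|J_z\rho(g)x\|_{L_m(\nu)}^m=\langle\pi(g)x^m,z\rangle=\langle x^m,\pi(g)^{-1}z\rangle=\langle x^m,z\rangle=\|J_zx\|_{L_m(\nu)}^m .
\]
Applying this identity to $x-y$ shows that $J_zx=J_zy$ implies $J_z\rho(g)x=J_z\rho(g)y$. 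Thus the formula $\rho_z^0(g)J_zx:=J_z\rho(g)x$ is well defined and linear on $J_z(X)$, and it is isometric for the $L_m(\nu)$-norm.

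Next, $\rho_z^0(g)$ extends uniquely to a linear isometry $\rho_z(g)$ of $E_z$. The relations $\rho_z^0(g)\rho_z^0(h)=\rho_z^0(gh)$ and $\rho_z^0(e)=I$ hold on $J_z(X)$, so by density they hold for the extensions. In particular $\rho_z(g^{-1})$ is an inverse of $\rho_z(g)$, so each $\rho_z(g)$ lies in $O(E_z)$ and $\rho_z$ is a homomorphism. Uniqueness follows because any representation satisfying \Cref{eq:descended-action} is determined on the dense subspace $J_z(X)$.

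For strong continuity, on $J_z(X)$ we have
\[
\|\rho_z(g)J_zx-\rho_z(g_0)J_zx\|=\|J_z(\rho(g)x-\rho(g_0)x)\|\le\|\rho(g)x-\rho(g_0)x\|_{2m},
\]
using that $J_z$ is a contraction. This tends to $0$ as $g\to g_0$. For a general $\xi\in E_z$, I would then run the same $2\|\xi-\eta\|+\cdots$ approximation argument as at the end of \Cref{lem:lifting}.
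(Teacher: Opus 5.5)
Your proposal is correct and follows the paper's proof essentially step for step. It uses the Cauchy--Schwarz bound for $\sigma$-finiteness, the H\"older contraction estimate, the invariance identity $\|J_z\rho(g)x\|_{L_m(\nu)}^m=\langle \pi(g)x^m,z\rangle_{L_2}=\langle x^m,z\rangle_{L_2}$ (from evenness of $m$ and $\pi(G)$-invariance of $z$), and extension by density with continuity checked on $J_z(X)$; your remark that applying this identity to $x-y$ gives well-definedness of $J_zx\mapsto J_z\rho(g)x$ spells out a step the paper leaves implicit.
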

\begin{proof}
By \Cref{lem:positive-projection}, $z\ge0$ almost everywhere.  Since
$P$ is an orthogonal projection and $v\in S(X)$,
$\|z\|_2\le\|v^m\|_2=1$.  Choose measurable sets $A_n\subseteq\Omega$
such that $\mu(A_n)<\infty$ and $\Omega=\bigcup_{n=1}^\infty A_n$.
Then Cauchy--Schwarz gives
\[
 \nu(A_n)=\int_{A_n}z\,d\mu
 \le \|z\|_2\mu(A_n)^{1/2}<\infty,
\]
so $\nu$ is $\sigma$-finite.

Since $\nu\ll\mu$, every $\mu$-equivalence class $x\in X$ determines
a unique $\nu$-equivalence class, denoted by $[x]_\nu$.  Moreover,
H\"older's inequality gives
\begin{equation}\label{eq:weighted-continuity}
 \|J_zx\|_{L_m(\nu)}^m
 =\int_\Omega z|x|^m\,d\mu
 \le \|z\|_2\bigl\||x|^m\bigr\|_2
 \le \|x\|_{2m}^m.
\end{equation}
Thus $J_zx\in L_m(\nu)$ for every $x\in X$, and $J_z$ is a
well-defined linear contraction.

Since $m$ is even, $|x|^m=x^m$.  Using \Cref{eq:lift-action}, the
orthogonality of $\pi(g)$, and the $\pi(G)$-invariance of $z$, we
obtain, for every $g\in G$ and $x\in X$,
\begin{equation}\label{eq:weighted-invariance}
\begin{aligned}
 \|J_z(\rho(g)x)\|_{L_m(\nu)}^m
 &=\langle z,(\rho(g)x)^m\rangle_{L_2}
  =\langle z,\pi(g)x^m\rangle_{L_2}\\
 &=\langle\pi(g^{-1})z,x^m\rangle_{L_2}
  =\langle z,x^m\rangle_{L_2}
  =\|J_zx\|_{L_m(\nu)}^m
\end{aligned}
\end{equation}

It follows from \Cref{eq:weighted-invariance} that the formula
\[
 T_g(J_zx)=J_z(\rho(g)x)
\]
defines a well-defined linear isometry $T_g$ on $J_z(X)$.  Its unique
continuous extension to $E_z$ will be denoted by $\rho_z(g)$.  The
extension associated with $g^{-1}$ is its inverse, and hence
$\rho_z(g)\in O(E_z)$.

For $g,h\in G$, the operators $\rho_z(gh)$ and
$\rho_z(g)\rho_z(h)$ agree on the dense subspace $J_z(X)$ and hence
on $E_z$.  Thus $\rho_z:G\to O(E_z)$ is a linear isometric
representation satisfying \Cref{eq:descended-action}.

It remains to verify strong continuity.  For $x\in X$ and
$g,g_0\in G$, \Cref{eq:weighted-continuity} gives
\[
 \|\rho_z(g)J_zx-\rho_z(g_0)J_zx\|_{L_m(\nu)}
 \le
 \|\rho(g)x-\rho(g_0)x\|_{2m}.
\]
The right-hand side tends to zero as $g\to g_0$ by the strong
continuity of $\rho$.  Hence the orbit maps are continuous on
$J_z(X)$.  Density of $J_z(X)$ in $E_z$, together with
$\|\rho_z(g)\|=1$ for every $g\in G$, extends this conclusion to all
vectors in $E_z$.  Therefore $\rho_z$ is strongly continuous.

Finally, any representation satisfying
\Cref{eq:descended-action} agrees with $\rho_z$ on the dense subspace
$J_z(X)$ and hence on all of $E_z$.  This proves uniqueness.
\end{proof}

We now combine the projection estimate and the descended action with
\Cref{eq:scalar-lifting} to obtain the recursive spectral-gap bound.

\begin{lemma}\label{lem:recurrence}
Let $m\ge2$ be an even integer.  If $\mathsf K_m>0$, then
\begin{equation}\label{eq:recurrence}
 \mathsf K_{2m}\ge
 \min\left\{
  \frac{\kappa_2}{2^{m+1}m\sqrt{3}},
  \frac{\mathsf K_m}{8\,3^{1/m}},
  \frac{\kappa_2}{3m8^m}
 \right\}.
\end{equation}
In particular, $\mathsf K_{2m}>0$.
\end{lemma}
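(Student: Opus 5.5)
Fix an admissible tuple $((\Omega,\Sigma,\mu),X,\rho,x)$ at exponent $2m$. By \Cref{lem:normalization}, replace $x$ by the normalized vector $v\in S(X)$ with $\distop(v,X^{\rho(G)})=1$ and $\sup_g\|\rho(g)v-v\|_{2m}\ge1$. It then suffices to bound $\delta:=\max_{q\in Q}\|\rho(q)v-v\|_{2m}$ from below by the minimum in \Cref{eq:recurrence}. If $\delta\ge\kappa_2/(2^{m+1}m\sqrt3)$ we are done, so assume it is smaller. Put $z=P(v^m)$. By \Cref{lem:positive-projection}, $z\ge0$ and $\|v^m-z\|_2\le m\delta/\kappa_2\le 2^{-m-1}/\sqrt3$, which is small. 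Form the weighted measure $\nu$, the contraction $J_z$, the space $E_z\subseteq L_m(\nu)$, and the representation $\rho_z$ from \Cref{lem:descent}. Since $\nu$ is $\sigma$-finite, $E_z$ is admissible at exponent $m$, and the definition of $\mathsf K_m$ gives
\[
 \max_{q\in Q}\|\rho_z(q)J_zv-J_zv\|_{L_m(\nu)}\ge\mathsf K_m\distop\bigl(J_zv,E_z^{\rho_z(G)}\bigr),
\]
where the left side is at most $\delta$ by \Cref{eq:weighted-continuity}. So the task is to bound $\distop(J_zv,E_z^{\rho_z(G)})$ from below by an absolute constant, say $\tfrac18 3^{-1/m}$. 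That yields $\delta\ge\mathsf K_m/(8\cdot3^{1/m})$ and the second term of the minimum.

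To get this lower bound, take $w\in E_z^{\rho_z(G)}$ nearly attaining the distance $\eta:=\distop(J_zv,E_z^{\rho_z(G)})$. For every $g$, $\|J_z(\rho(g)v)-J_zv\|_{L_m(\nu)}\le2\eta$, by the triangle inequality through $w$ and invariance. Now use the scalar inequality \Cref{eq:scalar-lifting} pointwise with $a=v(\omega)$ and $b=(\rho(g)v)(\omega)$, and integrate against $\mu$:
\[
 \|\rho(g)v-v\|_{2m}^{2m}\le4^m\Bigl(\|(\rho(g)v)^m-v^m\|_2^2+\int|v|^m|\rho(g)v-v|^m\,d\mu\Bigr).
\]
In the last integral, split $|v|^m=v^m=z+(v^m-z)$. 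The $z$-part is exactly $\|J_z(\rho(g)v-v)\|_{L_m(\nu)}^m\le(2\eta)^m$. The error part is bounded by Cauchy--Schwarz as $\|v^m-z\|_2\,\|\rho(g)v-v\|_{2m}^m\le 2^m\|v^m-z\|_2$, which is small.

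The first term is the main obstacle. It is not controlled by $\delta$ for arbitrary $g$, but it is controlled through the projection: $\|\pi(g)u-u\|_2\le\|\pi(g)(u-z)\|_2+\|u-z\|_2=2\|v^m-z\|_2$, since $z$ is $\pi$-invariant. Choose $g$ with $\|\rho(g)v-v\|_{2m}$ close to $1$, which is possible by the normalization. This gives
\[
 1\lesssim4^m\bigl(4\|v^m-z\|_2^2+(2\eta)^m+2^m\|v^m-z\|_2\bigr).
\]
Now split into two cases. If $\|v^m-z\|_2$ exceeds a threshold of order $8^{-m}/3$, then $\delta\ge\kappa_2\|v^m-z\|_2/m$ gives the third term $\kappa_2/(3m8^m)$. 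Otherwise the two error terms together contribute at most $\tfrac23$ of $4^{-m}$, which forces $(2\eta)^m\ge 4^{-m}/3$, i.e.\ $\eta\ge\tfrac18 3^{-1/m}$, the second case above. The first threshold, $\kappa_2/(2^{m+1}m\sqrt3)$, serves to keep $4\|v^m-z\|_2^2$ below $4^{-m}/3$; I will adjust the numerical constants when writing the proof.

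Finally, the identity $\max_q\|\rho(q)x-x\|/\distop(x,X^{\rho(G)})=\delta$ transfers the bound back to $x$. Taking the infimum over all tuples gives \Cref{eq:recurrence}, and positivity of $\mathsf K_{2m}$ follows because $\mathsf K_m>0$.
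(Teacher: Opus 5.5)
Your proposal is correct and follows the paper's proof essentially step for step. The shared steps are: the normalization, the nonnegative invariant projection $z=P(v^m)$, the descent to $E_z\subseteq L_m(\nu)$, the bound $\|(\rho(g)v)^m-v^m\|_2\le 2\|v^m-z\|_2$, the splitting $v^m=z+(v^m-z)$ inside the integrated scalar lifting inequality, and a supremum over $g$. The paper closes by contradiction with every term written in $\delta$, where you split cases on $\|v^m-z\|_2$ and $\eta$; your stated thresholds already give exactly the three terms of the minimum, so no further adjustment of constants is needed.
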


\begin{proof}
If there is no admissible tuple at exponent $2m$, then
$\mathsf K_{2m}=+\infty$ and the assertion is immediate.  Otherwise,
fix an admissible tuple
$((\Omega,\Sigma,\mu),X,\rho,x)$ in the definition of
$\mathsf K_{2m}$.  Apply \Cref{lem:normalization} to $x$, and denote
the resulting normalized vector by $v$.  Set
\[
 \delta=\max_{q\in Q}\|\rho(q)v-v\|_{2m},
 \qquad
 D=\sup_{g\in G}\|\rho(g)v-v\|_{2m}.
\]
The normalization of $v$ and the equality $\|v\|_{2m}=1$ give
$1\le D\le2$.

Let $\pi=\Pi_m\circ\rho$ be the lifted representation.  Let $z$ be
the orthogonal projection of $v^m$ onto
$\cH_m(X)^{\pi(G)}$.  By \Cref{lem:positive-projection}, the function
$z$ is nonnegative and fixed by $\pi(G)$.  Moreover,
\Cref{eq:positive-projection-error} gives
\[
 \|v^m-z\|_2
 \le\frac{m}{\kappa_2}\delta.
\]
It follows that
\begin{equation}\label{eq:power-global}
 \|(\rho(g)v)^m-v^m\|_2
 =
 \|\pi(g)(v^m-z)-(v^m-z)\|_2
 \le\frac{2m}{\kappa_2}\delta
 \qquad(g\in G).
\end{equation}

Apply \Cref{lem:descent} with this $v$.  By
\Cref{eq:descended-action} and the contractivity of $J_z$,
\[
 \max_{q\in Q}
 \|\rho_z(q)J_zv-J_zv\|_{E_z}
 \le\delta.
\]
If $J_zv\notin E_z^{\rho_z(G)}$, then
\[
 ((\Omega,\Sigma,\nu),E_z,\rho_z,J_zv)
\]
is an admissible tuple at exponent $m$, and the definition of
$\mathsf K_m$ gives
\[
 \distop\bigl(J_zv,E_z^{\rho_z(G)}\bigr)
 \le\frac{\delta}{\mathsf K_m},
\]
whereas the same estimate is immediate if $J_zv$ is fixed.  Notice
that the former case cannot occur when $\mathsf K_m=+\infty$, since it
would give an admissible tuple with a finite quotient in
\Cref{eq:Kr-definition}.  Hence, for every $g\in G$,
\begin{equation}\label{eq:weighted-global}
 \|\rho_z(g)J_zv-J_zv\|_{E_z}
 \le
 2\,\distop\bigl(J_zv,E_z^{\rho_z(G)}\bigr)
 \le\frac{2\delta}{\mathsf K_m}.
\end{equation}

Decomposing $v^m=z+(v^m-z)$ and using H\"older's inequality,
\Cref{eq:positive-projection-error}, and
\Cref{eq:weighted-global}, we obtain
\begin{align}
 \int_\Omega v^m|\rho(g)v-v|^m\,d\mu
 &=
 \int_\Omega z|\rho(g)v-v|^m\,d\mu
 +\int_\Omega(v^m-z)|\rho(g)v-v|^m\,d\mu \notag\\
 &\le
 \|\rho_z(g)J_zv-J_zv\|_{E_z}^m
 +\|v^m-z\|_2\|\rho(g)v-v\|_{2m}^m \notag\\
 &\le
 \left(\frac{2\delta}{\mathsf K_m}\right)^m
 +\frac{m\delta}{\kappa_2}
  \|\rho(g)v-v\|_{2m}^m.                 \label{eq:weighted-control}
\end{align}

Apply \Cref{eq:scalar-lifting} pointwise with
$a=v(\omega)$ and $b=(\rho(g)v)(\omega)$, and then integrate.  Since
$m$ is even, $|v|^m=v^m$.  Together with
\Cref{eq:power-global,eq:weighted-control}, this gives
\begin{equation}\label{eq:master}
\begin{aligned}
 \|\rho(g)v-v\|_{2m}^{2m}
 &\le4^m\left[
  \|(\rho(g)v)^m-v^m\|_2^2
  +\int_\Omega v^m|\rho(g)v-v|^m\,d\mu
 \right]\\
 &\le4^m\left[
  \left(\frac{2m\delta}{\kappa_2}\right)^2
  +\left(\frac{2\delta}{\mathsf K_m}\right)^m
  +\frac{m\delta}{\kappa_2}
   \|\rho(g)v-v\|_{2m}^m
 \right]
 \qquad(g\in G).
\end{aligned}
\end{equation}

Taking the supremum over $g\in G$ in \Cref{eq:master} and using
$1\le D\le2$, we obtain
\begin{equation}\label{eq:master-sup}
 1\le D^{2m}\le4^m\left[
  \left(\frac{2m\delta}{\kappa_2}\right)^2
  +\left(\frac{2\delta}{\mathsf K_m}\right)^m
  +\frac{m2^m\delta}{\kappa_2}
 \right].
\end{equation}

Suppose, toward a contradiction, that $\delta$ is strictly smaller
than the minimum in \Cref{eq:recurrence}.  Then
\[
 \left(\frac{2m\delta}{\kappa_2}\right)^2
 <\frac{1}{3\cdot4^m},
 \qquad
 \left(\frac{2\delta}{\mathsf K_m}\right)^m
 <\frac{1}{3\cdot4^m},
\]
and
\[
 \frac{m2^m\delta}{\kappa_2}
 <\frac{1}{3\cdot4^m}.
\]
Thus the right-hand side of \Cref{eq:master-sup} would be strictly
smaller than $1$, a contradiction.  Hence $\delta$ is at least the
minimum in \Cref{eq:recurrence}.

Finally, since $v=(x-a)/d$, where $a\in X^{\rho(G)}$ and
$d=\distop(x,X^{\rho(G)})$, we have
\[
 \delta
 =
 \frac{\max_{q\in Q}\|\rho(q)x-x\|_{2m}}
 {\distop\bigl(x,X^{\rho(G)}\bigr)}.
\]
Since the original admissible data were arbitrary,
\Cref{eq:recurrence} follows.
\end{proof}

\section{Proofs of the main results}\label{sec:main-proofs}

We now prove the two results stated in the introduction.  The
quantitative theorem follows by iterating the exponent-reduction
estimate, while the corollary combines this theorem with the
nonexceptional case of Bader, Furman, Gelander and Monod and their
duality principle.

\subsection{The uniform spectral-gap estimate}

\begin{proof}[Proof of \Cref{thm:uniform-subspace}]
Fix $G$, the Hilbert Kazhdan pair $(Q,\kappa_2)$, and an even
integer $p\ge2$ as in \Cref{thm:uniform-subspace}.  Throughout this
proof, write
\[
 \mathsf K_s=\mathsf K_s(G,Q).
\]
Factor $p$ uniquely as
\[
 p=2^a r,
 \qquad
 a\ge1,
 \qquad
 r\ \text{odd},
\]
and, for $1\le j\le a$, set
\[
 p_j=2^jr.
\]
Thus
\[
 p_1=2r,
 \qquad
 p_a=p.
\]
We construct the constant $c_p$ along the chain
\[
 p_1,p_2,\ldots,p_a.
\]

First define
\[
 c_{p_1}=c_{2r}=
 \begin{cases}
  \kappa_2, & r=1,\\[2mm]
  \displaystyle\frac{\kappa_2}{r2^r}, & r>1.
 \end{cases}
\]
If $r=1$, then $p_1=2$, and \Cref{eq:K2-lower} gives
\[
 \mathsf K_{p_1}
 =\mathsf K_2
 \ge\kappa_2
 =c_{p_1}.
\]
If $r>1$, then \Cref{lem:odd}, applied with $m=r$, gives
\[
 \mathsf K_{p_1}
 =\mathsf K_{2r}
 \ge\frac{\kappa_2}{r2^r}
 =c_{p_1}.
\]
Thus, in either case,
\begin{equation}\label{eq:initial-gap}
 \mathsf K_{p_1}\ge c_{p_1}>0.
\end{equation}

For $1\le j<a$, define recursively
\[
 c_{p_{j+1}}
 =
 \min\left\{
  \frac{\kappa_2}{2^{p_j+1}p_j\sqrt{3}},
  \frac{c_{p_j}}{8\,3^{1/p_j}},
  \frac{\kappa_2}{3p_j8^{p_j}}
 \right\}.
\]
We claim that
\begin{equation}\label{eq:inductive-gap}
 \mathsf K_{p_j}\ge c_{p_j}>0
 \qquad(1\le j\le a).
\end{equation}
The case $j=1$ is exactly \Cref{eq:initial-gap}.  Suppose that
\Cref{eq:inductive-gap} holds for some $j<a$.  Since
$p_j=2^jr$ and $j\ge1$, the integer $p_j$ is even and satisfies
$p_j\ge2$.  Moreover, the inductive hypothesis gives
$\mathsf K_{p_j}>0$.  Hence \Cref{lem:recurrence}, applied with
$m=p_j$, yields
\begin{align*}
 \mathsf K_{p_{j+1}}
 &=\mathsf K_{2p_j}\\
 &\ge
 \min\left\{
  \frac{\kappa_2}{2^{p_j+1}p_j\sqrt{3}},
  \frac{\mathsf K_{p_j}}{8\,3^{1/p_j}},
  \frac{\kappa_2}{3p_j8^{p_j}}
 \right\}\\
 &\ge
 \min\left\{
  \frac{\kappa_2}{2^{p_j+1}p_j\sqrt{3}},
  \frac{c_{p_j}}{8\,3^{1/p_j}},
  \frac{\kappa_2}{3p_j8^{p_j}}
 \right\}\\
 &=c_{p_{j+1}}>0.
\end{align*}
This proves \Cref{eq:inductive-gap} by induction.  Since $p_a=p$,
we obtain
\begin{equation}\label{eq:Kp-positive}
 \mathsf K_p\ge c_p>0.
\end{equation}
Because $a$, $r$, and the sequence $(p_j)_{j=1}^a$ are determined
uniquely by $p$, the constant $c_p$ depends only on $p$ and
$\kappa_2$.

Now let $(\Omega,\Sigma,\mu)$, $X$, $\rho$, and $x$ be as in
\Cref{thm:uniform-subspace}.  If $x\notin X^{\rho(G)}$, then these
data form an admissible tuple in the definition of $\mathsf K_p$.
Consequently, \Cref{eq:Kr-definition,eq:Kp-positive} give
\begin{align*}
 \max_{q\in Q}\|\rho(q)x-x\|_p
 &\ge
 \mathsf K_p\,
 \distop\bigl(x,X^{\rho(G)}\bigr)\\
 &\ge
 c_p\,
 \distop\bigl(x,X^{\rho(G)}\bigr).
\end{align*}
If $x\in X^{\rho(G)}$, then both sides of
\Cref{eq:main-gap} vanish.  Therefore \Cref{eq:main-gap} holds for
every $x\in X$, and the proof is complete.
\end{proof}

\subsection{Subspaces and quotients}

\begin{proof}[Proof of \Cref{cor:main}]
We first prove the closed-subspace assertion.  Let
$X\subseteq L_p(\mu)$ be a closed subspace.  If $p$ is not an even
integer, the conclusion follows from the result of Bader, Furman,
Gelander and Monod \cite[Theorem~A(ii)]{BFGM}.

Suppose that $p$ is even.  Choose a Hilbert Kazhdan pair
$(Q,\kappa_2)$ for $G$, and let
\[
 \rho:G\longrightarrow O(X)
\]
be an arbitrary strongly continuous linear isometric representation.
As a closed subspace of $L_p(\mu)$, the space $X$ is uniformly convex
and uniformly smooth.  Hence \Cref{lem:canonical-complement} gives a
closed $\rho(G)$-invariant subspace $X'_\rho$ such that
\[
 X=X^{\rho(G)}\oplus X'_\rho,
 \qquad
 (X'_\rho)^{\rho(G)}=\{0\}.
\]
Moreover, the quotient representation on $X/X^{\rho(G)}$ almost has
invariant vectors if and only if the restriction of $\rho$ to
$X'_\rho$ does.

If $X'_\rho=\{0\}$, then $X/X^{\rho(G)}=\{0\}$, so the quotient
representation does not almost have invariant vectors.  Suppose that
$X'_\rho\ne\{0\}$, and put
\[
 \rho'=\rho|_{X'_\rho}.
\]
Then $\rho':G\to O(X'_\rho)$ is a strongly continuous linear
isometric representation and
\[
 (X'_\rho)^{\rho'(G)}=\{0\}.
\]
Therefore \Cref{thm:uniform-subspace}, applied to $X'_\rho$ and
$\rho'$, gives
\[
 \max_{q\in Q}\|\rho'(q)v-v\|_p
 \ge c_p\|v\|_p
 \qquad(v\in X'_\rho).
\]
Let $e$ denote the identity element of $G$.  Since $e\in Q$, every
$v\in S(X'_\rho)$ satisfies
\begin{align*}
 \operatorname{diam}\bigl(\rho'(Q)v\bigr)
 &\ge
 \max_{q\in Q}
 \|\rho'(q)v-\rho'(e)v\|_p\\
 &=
 \max_{q\in Q}\|\rho'(q)v-v\|_p\\
 &\ge c_p.
\end{align*}
Consequently,
\[
 \inf_{v\in S(X'_\rho)}
 \operatorname{diam}\bigl(\rho'(Q)v\bigr)
 \ge c_p>0.
\]
By \Cref{eq:almost-invariant}, the representation $\rho'$ does not
almost have invariant vectors.  It follows from
\Cref{lem:canonical-complement} that neither does the quotient
representation on $X/X^{\rho(G)}$.  Since $\rho$ was arbitrary, $G$
has property $(T_X)$.  This proves the closed-subspace assertion for
every $1<p<\infty$.

We now prove the quotient assertion.  Let $N\subseteq L_p(\mu)$ be a
closed subspace and put
\[
 B=L_p(\mu)/N.
\]
If $B=\{0\}$, then property $(T_B)$ is immediate.  Assume that
$B\ne\{0\}$, and set
\[
 p'=\frac{p}{p-1}.
\]
Let
\[
 q:L_p(\mu)\longrightarrow B
\]
be the quotient map.  Under the canonical isometric identification
$L_p(\mu)^*=L_{p'}(\mu)$, its adjoint
\[
 q^*:B^*\longrightarrow L_{p'}(\mu)
\]
is an isometric embedding whose range is
\begin{equation}\label{eq:quotient-dual}
 N^\perp
 =
 \left\{
  f\in L_{p'}(\mu):
  \int_\Omega yf\,d\mu=0
  \text{ for every }y\in N
 \right\}.
\end{equation}
Thus
\[
 q^*:B^*\longrightarrow N^\perp
\]
is a surjective linear isometry, and $N^\perp$ is a closed subspace
of $L_{p'}(\mu)$.  The closed-subspace assertion already proved,
applied at exponent $p'$, gives property $(T_{N^\perp})$.

Conjugation by the isometry $q^*$ identifies the strongly continuous
linear isometric representations on $B^*$ with those on $N^\perp$.
Under this identification, fixed-vector spaces, quotient
representations, and the almost-invariant-vector condition are
preserved.  Hence $G$ has property $(T_{B^*})$.

Finally, uniform convexity and uniform smoothness pass to quotients,
so $B=L_p(\mu)/N$ is uniformly convex and uniformly smooth.  The
duality theorem of Bader, Furman, Gelander and Monod
\cite[Corollary~2.12]{BFGM} therefore gives
\[
 (T_B)\quad\Longleftrightarrow\quad(T_{B^*}).
\]
Since $G$ has property $(T_{B^*})$, it follows that $G$ has property
$(T_B)$.  This proves the quotient assertion and completes the proof.
\end{proof}


\begin{thebibliography}{9}

\bibitem{BFGM}
U.~Bader, A.~Furman, T.~Gelander, and N.~Monod,
\emph{Property $(T)$ and rigidity for actions on Banach spaces},
Acta Math. \textbf{198} (2007), no.~1, 57--105.

\bibitem{BaderICM}
U.~Bader,
\emph{Higher property $T$ and below-rank phenomena of lattices---an
extract},
in \emph{Proceedings of the International Congress of Mathematicians
2026, Volume~4: Invited Lectures (Sections~5--8)},
S.~Friedlander and Y.~Tschinkel (eds.), SIAM, Philadelphia, 2026,
24--40,
doi:\href{https://doi.org/10.1137/25M1806983}{10.1137/25M1806983}.

\bibitem{BHV}
B.~Bekka, P.~de la Harpe, and A.~Valette,
\emph{Kazhdan's Property $(T)$},
New Mathematical Monographs, vol.~11, Cambridge University Press,
Cambridge, 2008.

\bibitem{BoucherSpakula}
K.~Boucher and J.~\v{S}pakula,
\emph{Sobolev spaces and uniform boundary representations},
preprint, 2023, arXiv:2306.09999.

\bibitem{deLaSalleICM}
M.~de la Salle,
\emph{Analysis on simple Lie groups and lattices},
in \emph{ICM---International Congress of Mathematicians, Vol.~4:
Sections~5--8},
EMS Press, Berlin, 2023, 3166--3188,
doi:\href{https://doi.org/10.4171/ICM2022/160}{10.4171/ICM2022/160}.

\bibitem{Hardin}
C.~D.~Hardin, Jr.,
\emph{Isometries on subspaces of $L_p$},
Indiana Univ. Math. J. \textbf{30} (1981), no.~3, 449--465.

\end{thebibliography}
\end{document}